\newtheorem{theorem}{Theorem}[section]
\newtheorem{definition}[theorem]{Definition}
\newtheorem{lemma}[theorem]{Lemma}
\newtheorem{corollary}[theorem]{Corollary}
\newtheorem{proposition}[theorem]{Proposition}
\newtheorem{remark}[theorem]{Remark}
\newtheorem{assumption}{Assumption}[section]
\numberwithin{equation}{section} 
\newcommand{\intav}[1]{\mathchoice {\mathop{\vrule width 6pt height 3 pt depth  -2.5pt
\kern -8pt \intop}\nolimits_{\kern -6pt#1}} {\mathop{\vrule width
5pt height 3  pt depth -2.6pt \kern -6pt \intop}\nolimits_{#1}}
{\mathop{\vrule width 5pt height 3 pt depth -2.6pt \kern -6pt
\intop}\nolimits_{#1}} {\mathop{\vrule width 5pt height 3 pt depth
-2.6pt \kern -6pt \intop}\nolimits_{#1}}}
\numberwithin{equation}{section}
\begin{document}

\author{Dami\~ao J. Ara\'ujo}
\address{Departamento de Matem\'atica, Universidade Federal da Para\'iba 58059-900, Jo\~ao Pessoa - Para\'iba (Brazil)}
\email{araujo@mat.ufpb.br}
\author{Andreas Minne}
\address{}
\author{Edgard A. Pimentel}
\address{CMUC, Department of Mathematics, University of Coimbra, 3000-143 Coimbra, Portugal}
\email{edgard.pimentel@mat.uc.pt}

\title[Optimal boundary regularity for free boundary problems]{Fully nonlinear free boundary problems: optimal boundary regularity beyond convexity}
\date{\today}

\scriptsize 

\normalsize

\begin{abstract}
We study a general class of elliptic free boundary problems equipped with a Dirichlet boundary condition. Our primary result establishes an optimal $C^{1,1}$-regularity estimate for $L^p$-strong solutions at points where the free and fixed boundaries intersect. A key novelty is that no convexity or concavity assumptions are imposed on the fully nonlinear operator governing the system. 

Our analysis derives BMO estimates in a universal neighbourhood of the fixed boundary. It relies solely on a differentiability assumption. Once those estimates are available, applying by now standard methods yields the optimal regularity.
\end{abstract}

\keywords{Fully nonlinear elliptic free boundary problems; optimal boundary regularity; viscosity solutions.}
\subjclass{35R35; 35B65; 35D40}

\maketitle


\section{Introduction}\label{sec_intro}
We consider $L^n$-strong solutions to the unconstrained free boundary problem
\begin{equation}\label{maineq}
    \begin{cases}
        F(D^2u)=1&\hspace{.2in}\mbox{in}\hspace{.2in} B_1^+ \cap \Omega \\
        |D^2 u| \leq K &\hspace{.2in}\mbox{in}\hspace{.2in}B_1^+ \setminus \Omega \\
        u=g & \hspace{.2in}\mbox{in}\hspace{.2in} B_1', \\
\end{cases}
\end{equation}
where $K>0$ is a given constant, $F$ denotes a fully nonlinear elliptic operator, and $g\in C^{2,\alpha}(\overline B_1^+)$, $\alpha \in (0,1)$, is a given boundary data. Here, $B_1^+$ stands for the upper hemisphere of the unit ball $B_1\subset\mathbb{R}^n$, whereas $B_1'$ is the intersection of $\overline B_1$ with the set $\left\lbrace x_n=0\right\rbrace$. The unknown consists of a pair \((u, \Omega)\), where \(u \in W^{2,n}(B_1^+)\) satisfies \eqref{maineq} almost everywhere, and \(\Omega \subset \mathbb{R}^n\) is such that \(\Omega \cap B_1^+\) represents the region where the Hessian of \(u\) is essentially bounded by \(K\). We refer to $\partial\Omega$ as the free boundary. 

We prove boundary $C^{1,1}$-regularity estimates for $L^n$-strong solutions to \eqref{maineq}. Our main contribution is to work under no convexity/concavity assumptions on the operator $F$. Instead, we only require it to be uniformly elliptic and differentiable.

The model \eqref{maineq} amounts to the boundary value problem associated with a broad class of free boundary problems introduced in the work of Figalli and Shahgholian \cite{FS14}; see also \cite{FSoverview,FS15}. In \cite{FS14}, the authors consider $L^n$-strong solutions to 
\begin{equation}\label{inteq}
    \begin{cases}
        F(D^2u)=1 & \hspace{.2in}\mbox{in}\hspace{.2in} B_1 \cap \Omega \\
        |D^2 u| \leq K &\hspace{.2in}\mbox{in}\hspace{.2in} B_1 \setminus \Omega.
    \end{cases}
\end{equation}
Under a convexity/concavity assumption on the uniformly elliptic operator $F$, they prove interior $C^{1,1}$-regularity of the solutions. Once the (optimal) regularity of the solutions is understood, the authors examine the geometry of the free boundary. They prove $\partial\Omega\cap B_r(0)$ to be the graph of a $C^1$-regular function. 

The formulation in \eqref{inteq} accommodates a variety of obstacle-like types of problems. As noted in \cite{FS14}, for $\Omega=\left\lbrace u\neq 0 \right\rbrace$, \eqref{inteq} becomes the fully nonlinear obstacle problem
\[ 
    F(D^2u)=\chi_{\left\lbrace u\neq 0 \right\rbrace}.
 \]
Moreover, for $\Omega=\left\lbrace |Du|\neq 0 \right\rbrace$, see \cite{CS}, then \eqref{inteq} turns into
\[
    F(D^2u)=\chi_{\left\lbrace |Du|\neq 0 \right\rbrace}.
\]

Regarding the regularity of the solutions to \eqref{inteq}, the arguments in \cite{FS14} unravel in two main steps. The authors first notice that an $L^n$-strong solution to \eqref{inteq} is an $L^n$-viscosity solution to an equation of the form $F(D^2u)=f$ in $B_1$, where $f\in L^\infty(B_1)$. Under the convexity of the operator, this observation unlocks $BMO$-estimates for the hessian of $u$. These estimates allow the authors to produce a fully nonlinear counterpart to the arguments in \cite{ALS}. Indeed, instead of considering projections on second-order harmonic polynomials, they work under a projection on polynomials constrained by the BMO estimates stemming from the equation. Here, the dichotomy relates the size of such polynomials and a decay rate for scaled sets depending on $\Omega$. The parabolic counterpart of the findings in \cite{FS14} are reported in \cite{FS15}. An extension of this corpus of results to the context of operators with variable coefficients and explicit dependence on lower-order terms is the subject of \cite{IM2}. In \cite{IM}, the authors study the free and the fixed boundary intersection and prove a non-transversality result in the planar case. Regarding the non-transversal intersection of the free and the fixed boundary in arbitrary dimension, we mention the developments in \cite{Indrei_2019}. See also \cite{Indrei_2019b,Indrei_2024}.

Concerning the boundary regularity of the solutions to \eqref{maineq}, we notice that convexity is critical even if one reduces the problem to an interior one. Indeed, even in the case $g\equiv 0$, by considering an odd reflection of $u$ around the $x_n$-axis, the use of an interior $C^{1,1}$-regularity (e.g., \cite[Theorem 1.2]{FS14}), would still require the convexity of $F$. 

We examine the regularity of $L^n$-strong solutions to \eqref{maineq} in the \emph{absence} of convexity for the operator $F$. Instead, we suppose $F$ is differentiable. Under this condition, we relate strong and viscosity solutions to that problem. More precisely, we recall that $L^p$-viscosity solutions to 
\[
    F(D^2u)=0\hspace{.2in}\mbox{in}\hspace{.2in}B_1^+,
\]
with $u=0$ on $B_1'$, are $C^{2,\alpha}$-regular with estimates in a uniform neighbourhood of the flat boundary, provided $F$ is differentiable \cite[Theorem 1.3]{SS}. In this setting, we prove a BMO-estimate for the $L^n$-strong solutions to \eqref{maineq}. Once such control in average is available, we construct a sequence of polynomials and explore a dichotomy. If this sequence is bounded, its relation with the Hessian of the solutions yields $C^{1,1}$-estimates. Conversely, if this sequence is unbounded, we derive a geometric decay rate for the measure of scaled sets related to the region where the Hessian of $u$ is bounded by $K$. Such a geometric decay frames an auxiliary problem under a pointwise boundary-variant of Caffarelli's $C^{2,\alpha}$-regularity theory; see \cite[Section 2.3]{LW}. Our main result reads as follows.

\begin{theorem}\label{mainthm}
Let $u\in W^{2,n}(B_1^+)$ be an $L^n$-strong solution to \eqref{maineq}. Suppose Assumptions \ref{assump_F} and \ref{assump_FC1}, to be detailed further, are in force. Then there exist universal constants $C>0$ and $0<\mu\ll 1$ such that 
$$
\left|D^2 u(x)\right| \leq C,
$$
for almost every $x \in B_{1/2}^\mu$.
\end{theorem}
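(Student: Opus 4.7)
The plan is to adapt the interior dichotomy strategy of \cite{FS14} to the boundary setting, replacing every use of Evans--Krylov (which required convexity of $F$) by the boundary $C^{2,\alpha}$-theory of \cite{SS}, available under the differentiability Assumption \ref{assump_FC1}. The first step is to recast \eqref{maineq} as a single equation: uniform ellipticity together with the pointwise bound $|D^2 u|\leq K$ outside $\Omega$ force $F(D^2 u)=f$ almost everywhere for some $f\in L^\infty(B_1^+)$, so $u$ is also an $L^n$-viscosity solution with $u=g$ on $B_1'$. At a fixed-boundary/free-boundary contact point $x_0\in\overline{B_1'}\cap\partial\Omega\cap B_{1/2}^\mu$, subtracting a quadratic correction built from $g\in C^{2,\alpha}$ reduces matters to zero Dirichlet data up to second order at $x_0$.

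The next technical ingredient is a boundary $BMO$ bound for $D^2 u$ in $B_{1/2}^\mu$, for some universal $\mu\ll 1$. I would prove it via a Campanato-type iteration on dyadic half-balls $B^+_{2^{-k}}(x_0)$: at each scale, compare $u$ with the $L^n$-viscosity solution of $F(D^2 v)=0$ sharing the Dirichlet data on the flat face, and invoke \cite[Theorem 1.3]{SS} to obtain uniform $C^{2,\alpha}$-estimates for $v$ up to $B_1'$. Summing the resulting dyadic oscillation decay delivers the $BMO$ control.

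Armed with this $BMO$ estimate, I would construct at each contact point $x_0$ a sequence $\{P_k\}$ of second-order polynomials satisfying $F(D^2 P_k)=1$, matching the boundary data to second order, and optimally approximating $u$ on $B^+_{2^{-k}}(x_0)$ in an averaged sense. The $BMO$ control forces $\|P_{k+1}-P_k\|\leq C$, producing the dichotomy
\[
\sup_k\|P_k\|<\infty\qquad\text{or}\qquad\|P_k\|\to\infty.
\]
In the first alternative, the approximation property combined with $|D^2 u|\leq K$ on $B_1^+\setminus\Omega$ delivers a pointwise $C^{1,1}$-bound at $x_0$, and the theorem follows.

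The main obstacle is ruling out the second alternative, since in the interior proof of \cite{FS14} this is precisely where convexity of $F$ enters via Evans--Krylov. I would argue as follows: because $|D^2 u|\leq K$ on $B_1^+\setminus\Omega$ while $\|D^2 P_k\|\to\infty$, the density of $(B_1^+\setminus\Omega)\cap B_{2^{-k}}(x_0)$ must decay at a geometric rate, so that the rescaled blow-up limit solves $F(D^2 w)=1$ in $B_1^+$ with zero Dirichlet data on $B_1'$. The pointwise boundary $C^{2,\alpha}$-theory of \cite[Section 2.3]{LW}, which relies only on Assumption \ref{assump_FC1} rather than on convexity, then constrains $w$ to a quadratic polynomial of universally bounded norm, contradicting $\|P_k\|\to\infty$. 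Iterating the argument at every $x_0\in\overline{B_1'}\cap B_{1/2}^\mu$, together with the trivial bound on $B_1^+\setminus\Omega$ and the boundary Silvestre--Sirakov regularity inside $\Omega$, yields the almost-everywhere estimate asserted in Theorem \ref{mainthm}.
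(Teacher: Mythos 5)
Your overall architecture matches the paper's: reduce \eqref{maineq} to $F(D^2u)=f$ with $f\in L^\infty$, prove a boundary BMO/polynomial approximation estimate in a strip $B_{1/2}^\mu$ via the Silvestre--Sirakov $C^{2,\alpha}$ theory (this is Theorem \ref{BMOthm} and Corollary \ref{honorato}), and then run a dichotomy on the size of the approximating polynomials, using a geometric decay of $|B^+_{r}(x_0)\setminus\Omega|$ when the polynomials are large (Proposition \ref{mondego}). The genuine gap is in how you close the second alternative. You propose to \emph{rule it out} by passing to a blow-up limit $w$ solving $F(D^2w)=1$ in $B_1^+$ with zero data, asserting that the pointwise theory of \cite{LW} forces $w$ to be a quadratic polynomial of universally bounded norm, ``contradicting $\|P_k\|\to\infty$''. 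This step fails on three counts. First, the compactness needed to extract a blow-up limit requires subtracting the polynomials $P_k$ (otherwise the rescalings are not uniformly bounded, since boundedness of the rescaled family is exactly the $C^{1,1}$ bound you are trying to prove); once you subtract $P_k$ with $\|D^2P_k\|\to\infty$, the rescaled functions solve equations driven by $F(\,\cdot\,+D^2P_k)$, so the limit equation is not $F(D^2w)=1$ but one driven by a subsequential limit operator. Second, solutions of a uniformly elliptic equation in a half-ball with zero flat data are merely $C^{2,\alpha}$ near the flat boundary, not quadratic polynomials, so there is no Liouville-type rigidity to invoke. Third, and decisively, even if the limit were a bounded quadratic polynomial, this says nothing about $\|P_k\|$: the limit is built from the \emph{differences} $u-P_k$, so its size cannot contradict $\|P_k\|\to\infty$. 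The paper does not derive a contradiction at all: in the unbounded branch it works at the first scale $k_0$ at which the polynomials exceed the threshold, where $|p_{2^{-k_0},x_0}|\le C+2M$ is still universally bounded thanks to the bounded consecutive increments from Theorem \ref{BMOthm}; from that scale onward, Proposition \ref{mondego} gives the geometric $L^n$-average decay \eqref{eq_diospiro} of the right-hand side, and the pointwise boundary variant of Caffarelli's theory (\cite[Theorem 2.7]{LW}) applied to the corrected function yields $|D^2u(x_0)|\le |D^2\overline u_0(0)|+|p_{2^{-k_0},x_0}|\le C$ directly. Without the ``add back the polynomial at the transition scale'' step, your argument does not produce the Hessian bound at $x_0$.

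Two further points need substance rather than assertion. The density decay itself is not a consequence of merely juxtaposing $|D^2u|\le K$ off $\Omega$ with $\|D^2P_k\|\to\infty$: in the paper it is proved (Proposition \ref{mondego}) by introducing the auxiliary solutions $v_{r,x_0}$ of the homogeneous Dirichlet problem, an ABP bound (Lemma \ref{lem_abp}) for the remainder $w_{r,x_0}$, and boundary $W^{2,p}$ estimates (Proposition \ref{prop_honorato}) to compare $\int|p_{r,x_0}|^{2n}$ with $\int|D^2u|^{2n}+|D^2v|^{2n}+|D^2w|^{2n}$ over the rescaled complement set; you should at least indicate this mechanism. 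Finally, your concluding sentence covers the points of $B_{1/2}^\mu$ away from the free boundary by ``Silvestre--Sirakov regularity inside $\Omega$'', but those estimates degenerate as one approaches $\partial\Omega$; to get a bound at almost every point of the strip one either argues at Lebesgue points of $D^2u$ with the normalization $u(x_0)=|Du(x_0)|=0$, as the paper does, or combines the bound at free-boundary points with estimates at scale comparable to the distance to $\partial\Omega$, and this needs to be said explicitly.
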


\begin{remark}[General $C^{2,\alpha}$-regular domains]\label{rem_adriatico}\normalfont
    We state Theorem \ref{mainthm} for the upper unit ball $B_1^+$ for simplicity. Indeed, if we prescribe \eqref{maineq} in a $C^{2,\alpha}$-regular domain $\mathcal{U}\subset \mathbb{R}^n$, it would be possible to recover the unit ball setting by flattening the boundary $\partial \mathcal{U}$. In that case, \eqref{maineq} would be driven by an operator $\tilde F:S(n)\times \mathbb{R}^n\times B_1^+\to\mathbb{R}$, which incorporates into $F$ the geometric adjustments stemming from the (local) flattening of $\partial\mathcal{U}$; see \cite[Proposition 2.1]{SS}. Our rationale still applies in that case; it suffices to extend our analysis along the same lines as in \cite{IM2}.
\end{remark}


The remainder of this paper is organised as follows. Section \ref{subsec_ma} details our main assumption, whereas Section \ref{subsec_solv} discusses the notions of solutions used in the paper. We recall former boundary regularity results of second order in Section \ref{subsec_sobolev}. In Section \ref{BMOsec}, we establish BMO-estimates for the solutions to \eqref{maineq} in a uniform neighbourhood of the flat boundary. The proof of Theorem \ref{mainthm} is the subject of Section \ref{mainsec}.

\section{Preliminaries}\label{sec_prelim}

In this section, we collect the foundational material necessary for the developments in the paper. We begin by introducing key definitions and outlining the main assumptions that will be employed throughout our analysis. These elements provide the framework for understanding the problem setting and serve as the basis for the subsequent results. 

\subsection{Main assumptions}\label{subsec_ma}

Denote with $S(n)$ the space of symmetric matrices of order $n$.  For $x\in \mathbb{R}^n$ and $r>0$, we denote with $B_r^+(x)$ the upper hemisphere of the ball of radius $r$ centred at $x\in\mathbb{R}^n$. That is,
\[
    B_r^+(x)\coloneqq\left\lbrace y\in\mathbb{R}^n\,|\,\left\|y-x\right\|<r, \,y_n>0 \right\rbrace.
\]
The flat boundary of $B_r^+(x)$ is denoted with $B_r'(x)$, and given by
\[
    B_r'(x)\coloneqq\left\lbrace y\in\mathbb{R}^n\,|\,\left\|y-x\right\|<r, \,y_n=0 \right\rbrace.
\]
Finally, for $0<\mu\ll1$ and $r>0$, we define the strip ball
$$
    B_{r}^\mu(x)\coloneqq\{y\in B^+_{r}(x)\, | \, y_n < \mu \} \cup B_{r}'.
$$
As usual, we set $B_r^+(0)\eqqcolon B_r^+$, $B_r'(0)\eqqcolon B_r'$, and $B_r^\mu(0)\eqqcolon B_r^\mu$. Our first assumption concerns the operator $F$.

\begin{assumption}[Uniform ellipticity]\label{assump_F}
    We suppose $F:S(n)\to\mathbb{R}$ is a $(\lambda,\Lambda)$-elliptic operator. That is, for every $M,\,N\in S(n)$ we have
    \begin{equation*}\label{eq_vesic}
        \lambda\left\|N\right\|\leq F(M+N)-F(M)\leq\Lambda\left\|N\right\|,
    \end{equation*}
    provided $N\geq 0$. Furthermore, $F(0)=0$.
\end{assumption}
We also require $F$ to satisfy a differentiability condition.

\begin{assumption}[Differentiability of the operator]\label{assump_FC1}
We suppose $F\in C^1(S(n))$. That is, there exists a modulus of continuity $\omega_F:\mathbb{R}_+\to\mathbb{R}_+$ such that 
    \[
        \left|DF(M)-DF(N)\right|\leq\omega_F(|M-N|),
    \]
for every $M,\,N\in S(n)$. 
\end{assumption}

Assumptions \ref{assump_F}--\ref{assump_FC1} completely characterise the class of operators under analysis in the present manuscript. As mentioned, we drop the usual convexity assumption on $F$, imposing a differentiability condition on the operator; see \cite{Savin}. Notice that Assumption \ref{assump_F} naturally generalises to the case of operators with variable coefficients. When dealing with such operators, we require them to have a modulus of continuity with respect to $x\in B_1^+$; this is the content of the next assumption.

\begin{assumption}[Continuity of the operator]\label{assump_continuityop}
    Let $F:S(n)\times B_1^+\to\mathbb{R}$ be a fully nonlinear uniformly elliptic operator. We suppose there exists $C>0$ and $\beta\in (0,1)$ such that 
    \[
        \left|F(M,x)-F(M,y)\right|\leq C|x-y|^\beta\left\|M\right\|,
    \]
    for every $M\in S(n)$ and every $x,y\in B_1^+$.
\end{assumption}

Before we proceed, recall the definition of the Pucci extremal operators. Let $0<\lambda\leq\Lambda$; define $\mathcal{A}_{\lambda,\Lambda}\subset S(n)$ as
\[
	\mathcal{A}_{\lambda,\Lambda}\coloneqq\left\lbrace A\in S(n)\;|\;\lambda|\xi|^2\leq A\xi\cdot\xi\leq\Lambda|\xi|^2\hspace{.1in}\mbox{for every}\hspace{.1in}\xi\in\mathbb{R}^n\right\rbrace.
\]

\begin{definition}[Extremal Pucci operators]\label{def_pucci}
Fix constants $0<\lambda\leq\Lambda$. The extremal Pucci operator $\mathcal{M}^-_{\lambda,\Lambda}:S(n)\to\mathbb{R}$ is given by
\[
	\mathcal{M}^-_{\lambda,\Lambda}(M)\coloneqq\inf_{A\in\mathcal{A}_{\lambda,\Lambda}}\,{\rm Tr}(AM).
\]
We also define $\mathcal{M}^+_{\lambda,\Lambda}(M)\coloneqq-\mathcal{M}^-_{\lambda,\Lambda}(-M)$.
\end{definition}

Notice Assumption \ref{assump_F} can be phrased in terms of $\mathcal{M}^\pm_{\lambda,\Lambda}$. Indeed, $F$ is $(\lambda,\Lambda)$-elliptic if and only if
\begin{equation}\label{eq_francesinha}
    \mathcal{M}^-_{\lambda,\Lambda}(M-N)\leq F(M)-F(N)\leq\mathcal{M}^+_{\lambda,\Lambda}(M-N),
\end{equation}
for every $M,\,N\in S(n)$.

\subsection{Solvability in the $L^p$-strong and $L^p$-viscosity senses}\label{subsec_solv}

We study $L^n$-strong solutions to \eqref{maineq}. However, several arguments in the paper stem from the realm of $L^n$-viscosity solutions. Therefore, we recall the connection between both notions in the context of uniformly elliptic equations \cite{CCKS}. For the sake of completeness, we include the next two definitions. 

\begin{definition}[$L^p$-viscosity solution]\label{def_oxford}
Let $G:S(n)\to\mathbb{R}$ be a $(\lambda,\Lambda)$-uniformly elliptic operator and $f\in L^p(\Omega)$ for some $p>n/2$. We say that $u\in C(\Omega)$ is an $L^p$-viscosity sub-solution to 
    \begin{equation}\label{eq_lorvao}
        G(D^2u,x)=f\hspace{.2in}\mbox{in}\hspace{.2in}\Omega,
    \end{equation}
if, whenever $\phi\in W^{2,p}_{loc}(\Omega)$ is such that $u-\phi$ has a local minimum at $x_0\in\Omega$, we have
\[
	{\rm ess}\limsup_{x\to x_0}\left(G(D^2\phi(x),x)-f(x)\right)\geq 0.
\]
We say that $u\in C(\Omega)$ is an $L^p$-viscosity super-solution to \eqref{eq_lorvao} if, whenever $\phi\in W^{2,p}_{loc}(\Omega)$ is such that $u-\phi$ has a local maximum at $x_0\in\Omega$, we have
\[
	{\rm ess}\liminf_{x\to x_0}\left(G(D^2\phi(x),x)-f(x)\right)\leq 0.
\]
If $u\in C(\Omega)$ is an $L^p$-viscosity sub-solution and an $L^p$-viscosity super-solution to \eqref{eq_lorvao}, we say it is an $L^p$-viscosity solution to \eqref{eq_lorvao}.
\end{definition}

\begin{definition}[$L^p$-strong solution]\label{def_strongsol}
    Let $G:S(n)\to\mathbb{R}$ be a $(\lambda,\Lambda)$-uniformly elliptic operator and $f\in L^p(\Omega)$ for some $p>n/2$. We say that $u\in W^{2,p}(\Omega)$ is an $L^p$-strong solution to \eqref{eq_lorvao} if $G(D^2u(x),x)=f(x)$ for almost every $x\in\Omega$.
\end{definition}

Although Definitions \ref{def_strongsol} and \ref{def_oxford} require $p>n/2$, our arguments impose $p>p_0>n/2$, where $p_0=p_0(\lambda,\Lambda,n)$ is the so-called Escauriaza exponent. That is, the integrability level above which the Aleksandrov--Bakelman--Pucci estimates are available for $L^p$-viscosity solutions to \eqref{eq_lorvao}. Throughout our analysis, we deal with problems whose right-hand side is essentially bounded, ensuring the integrability condition mentioned above is naturally met. Before proceeding, we note that a solution satisfying $\left\|u\right\|_{L^\infty(B_1^+)}\leq 1$ is referred to as a \emph{normalized} solution, both in the $L^p$-strong and $L^p$-viscosity sense.

A fundamental ingredient in our argument is the connection of $L^p$-strong solutions to \eqref{maineq} and $L^p$-viscosity solutions to a uniformly elliptic equation. Suppose $u\in W^{2,p}(B_1^+)$ is an $L^p$-strong solution to \eqref{maineq}. Therefore, 
\begin{equation}\label{eq_mientras}
    F(D^2u(x))=1\hspace{.2in}\mbox{for almost every}\hspace{.2in}x\in B_1^+\cap\Omega.
\end{equation}
However, $\left|D^2u(x)\right|\leq K$ for almost every $x\in B_1^+\setminus\Omega$. Hence, Assumption \ref{assump_F} yields
\[
    \mathcal{M}^-_{\lambda,\Lambda}(D^2u(x))+F(0)\leq F(D^2u(x))\leq\mathcal{M}^+_{\lambda,\Lambda}(D^2u(x))+F(0),
\]
which in turn implies
\begin{equation}\label{eq_todavia}
    \left|F(D^2u(x))\right|\leq C(\lambda,\Lambda,d,K)+F(0)\hspace{.2in}\mbox{for almost every}\hspace{.2in}x\in B_1^+\setminus\Omega.
\end{equation}
By combining \eqref{eq_mientras} and \eqref{eq_todavia}, one gets that 
\begin{equation}\label{eq_deu}
    F(D^2u(x))=f(x)\hspace{.2in}\mbox{for almost every}\hspace{.2in}x\in B_1^+,
\end{equation}
for some $f\in L^\infty(B_1^+)$. Thus an $L^p$-strong solution to \eqref{maineq} is also an $L^p$-strong solution to \eqref{eq_deu}. We establish the following proposition by resorting to \cite[Lemma 2.8]{CCKS}.

\begin{proposition}[$L^p$-viscosity solution]\label{prop_nacional}
    Let $u\in W^{2,p}(B_1^+)$ be an $L^p$-strong solution to \eqref{maineq}, for $p\geq n$. Then there exists $f\in L^p(B_1^+)$ such that $u$ is an $L^p$-viscosity solution to
    \[
        F(D^2u)=f\hspace{.2in}\mbox{in}\hspace{.2in}B_1^+.
    \]
\end{proposition}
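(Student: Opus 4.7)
The proof reduces to a fairly direct application of a known equivalence between strong and viscosity solvability under uniform ellipticity. The plan is essentially to promote the pointwise a.e.\ identity $F(D^2 u) = f$ to the viscosity framework via \cite[Lemma 2.8]{CCKS}, after first checking that a candidate right-hand side $f$ lies in the required $L^p$ class.

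First, I would construct $f$ explicitly by setting $f(x) \coloneqq F(D^2 u(x))$ at every point where $D^2 u$ exists in the classical a.e.\ sense, which is almost every $x\in B_1^+$ since $u\in W^{2,p}(B_1^+)$. On $B_1^+\cap\Omega$, the first line of \eqref{maineq} yields $f\equiv 1$ a.e. On $B_1^+\setminus\Omega$, the bound $|D^2u|\le K$ combined with Assumption \ref{assump_F} and the Pucci sandwich \eqref{eq_francesinha} (together with $F(0)=0$) forces $|f(x)|\le C(\lambda,\Lambda,n,K)$ a.e. These are precisely the computations carried out in \eqref{eq_mientras}--\eqref{eq_deu} of the excerpt, and they immediately give $f\in L^\infty(B_1^+)\hookrightarrow L^p(B_1^+)$ for the required $p\ge n$.

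Second, once it is established that $u\in W^{2,p}(B_1^+)$ satisfies the uniformly elliptic equation $F(D^2u)=f$ pointwise a.e.\ in $B_1^+$, I would invoke \cite[Lemma 2.8]{CCKS}: for a $(\lambda,\Lambda)$-uniformly elliptic operator and $p$ above the Escauriaza exponent (which holds for $p\ge n$), every $W^{2,p}$-a.e.\ solution is automatically an $L^p$-viscosity solution in the sense of Definition \ref{def_oxford}. The mechanism behind that lemma is to test against an arbitrary $\varphi\in W^{2,p}_{\mathrm{loc}}$ touching $u$ from above (or below) at an interior point $x_0$, and to derive the required $\ess\limsup$/$\ess\liminf$ inequality by applying an ABP-type estimate to $u-\varphi$ on small balls around $x_0$; here uniform ellipticity of $F$ is exactly what allows one to compare $F(D^2\varphi(x),x)$ with $F(D^2u(x),x)=f(x)$ through the Pucci extremal operators.

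The argument presents no substantive obstacle since all ingredients are in place: the ellipticity assumption yields both the $L^\infty$ bound on $f$ and the CCKS-type comparison inequalities, while the integrability $p\ge n$ is comfortably above the Escauriaza threshold. The only point requiring minor care is that the proposition is stated interior to $B_1^+$, so touching points lie strictly inside $B_1^+$ and no boundary-touching analysis along $B_1'$ enters the argument; the CCKS lemma applies verbatim.
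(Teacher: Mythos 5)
Your proposal is correct and follows essentially the same route as the paper: the authors also define $f(x)\coloneqq F(D^2u(x))$, use \eqref{eq_mientras}--\eqref{eq_todavia} (the Pucci sandwich from Assumption \ref{assump_F} together with $|D^2u|\le K$ off $\Omega$) to get $f\in L^\infty(B_1^+)\subset L^p(B_1^+)$, and then conclude by invoking \cite[Lemma 2.8]{CCKS}, exactly as you do. No gaps; your added remarks on the Escauriaza exponent and the interior nature of the touching points are consistent with the paper's setup.
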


In what follows we consider $L^p$-viscosity solutions to
\begin{equation}\label{eq_magicflute}
    \begin{cases}
        \mathcal{M}_{\lambda,\Lambda}^-(D^2v)\leq g\leq\mathcal{M}_{\lambda,\Lambda}^+(D^2v)&\hspace{.2in}\mbox{in}\hspace{.2in}B_1^+\\
        v=0&\hspace{.2in}\mbox{on}\hspace{.2in}\partial B_1^+,
    \end{cases}
\end{equation}
where $g\in L^p(B_1^+)$, for some $n/2<p_0<p$. We resort to an Aleksandrov--Bakelman--Pucci estimate to conclude that solutions to \eqref{eq_magicflute} satisfy
an $L^\infty$-estimate. For completeness, we include it in the sequel in the form of a lemma.

\begin{lemma}\label{lem_abp}
    Let $v\in C(B_1^+)$ be an $L^n$-viscosity solution to \eqref{eq_magicflute}. Suppose $g\in L^p(B_1^+)$, where $n/2<p_0<p$. Then there exists $C>0$ such that 
    \[
        \sup_{x\in B_1^+}|v(x)|\leq C\left\|g\right\|_{L^p(B_1^+)}.
    \]
\end{lemma}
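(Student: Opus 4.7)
The plan is to recognize Lemma~\ref{lem_abp} as a direct application of the Aleksandrov--Bakelman--Pucci (ABP) maximum principle in the $L^p$-viscosity framework. The two-sided inequality in \eqref{eq_magicflute} is equivalent to $v$ being simultaneously an $L^p$-viscosity sub-solution and super-solution of two extremal Pucci equations, and the zero boundary datum on the entire topological boundary $\partial B_1^+$ places us exactly in the hypotheses of the ABP for bounded domains of finite diameter.

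First, I would split the two-sided condition into its two one-sided pieces. The inequality $\mathcal{M}^+_{\lambda,\Lambda}(D^2 v)\geq g$ says, via Definition \ref{def_oxford}, that $v$ is an $L^p$-viscosity sub-solution of $\mathcal{M}^+_{\lambda,\Lambda}(D^2 v)=g$ in $B_1^+$, while $\mathcal{M}^-_{\lambda,\Lambda}(D^2 v)\leq g$ says that $v$ is an $L^p$-viscosity super-solution of $\mathcal{M}^-_{\lambda,\Lambda}(D^2 v)=g$ in $B_1^+$. Because $v\equiv 0$ on $\partial B_1^+$, both $v^+$ and $v^-$ vanish on the whole boundary, so each one-sided problem satisfies the boundary condition required by the ABP.

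Next, I would invoke the ABP estimate for $L^p$-viscosity solutions (the version of Caffarelli--Crandall--Kocan--\'Swi\k{e}ch, sharpened by Escauriaza for exponents $p>p_0(n,\lambda,\Lambda)$) once for each one-sided inequality. Applying it to $v^+$ yields $\sup_{B_1^+} v^+\leq C\|g^-\|_{L^p(B_1^+)}$, and applying it to $v^-$ yields $\sup_{B_1^+} v^-\leq C\|g^+\|_{L^p(B_1^+)}$, with the constant $C=C(n,\lambda,\Lambda,p)$ depending only on the diameter of $B_1^+$ (equal to $2$) and on the ellipticity constants. Summing the two inequalities and using $\|g^+\|_{L^p}+\|g^-\|_{L^p}\leq 2\|g\|_{L^p}$ delivers the desired bound.

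The only technical point worth verifying is that the $L^p$-viscosity ABP estimate applies in the half-ball $B_1^+$, which is only Lipschitz at the equator $B_1'$. This is not a genuine obstacle: the classical ABP requires only a bounded ambient domain, since the argument proceeds through convex envelopes and is insensitive to the regularity of $\partial B_1^+$. The assumption $p>p_0$ is what replaces the $L^n$-norm by $L^p$ in the estimate, and it is granted by hypothesis.
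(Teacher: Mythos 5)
Your proposal is correct and follows essentially the same route as the paper, which simply invokes the Aleksandrov--Bakelman--Pucci estimate for $L^p$-viscosity solutions (Proposition 3.3 of \cite{CCKS}) applied to the two one-sided extremal inequalities in \eqref{eq_magicflute}. Your additional remarks on splitting into sub/super-solutions, the roles of $g^\pm$, and the irrelevance of the Lipschitz corner at $B_1'$ are accurate elaborations of that same argument.
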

The proof of Lemma \ref{lem_abp} follows from a straightforward application of \cite[Proposition 3.3]{CCKS}. The next section discusses boundary regularity estimates of second order, which play a fundamental role in the proof of Theorem \ref{mainthm}.

\subsection{Boundary regularity estimates of second order}\label{subsec_sobolev}

Our strategy is to produce a BMO-estimate in a uniform neighbourhood of the flat boundary $B_1'$. To that end, our argument builds upon available regularity estimates in $W^{2,p}$ and $C^{2,\alpha}$-spaces. Let $u\in C(B_1^+)$ be an $L^p$-viscosity solution to
\begin{equation}\label{eq_coimbra}
    \begin{cases}
        F(D^2u,x)=f&\hspace{.2in}\mbox{in}\hspace{.2in}B_1^+\\
        u=0&\hspace{.2in}\mbox{on}\hspace{.2in}B_1'.
    \end{cases}
\end{equation}

By imposing different conditions on the ingredients in \eqref{eq_coimbra}, one obtains distinct second-order estimates. We proceed with a proposition.

\begin{proposition}[Sobolev boundary regularity, \protect{\cite[Theorem~4.5]{Win2009}}]\label{prop_honorato}
    Let $v\in C(B_1^+)$ be an $L^p$-viscosity solution to \eqref{eq_coimbra}, with $f\in L^p(B_1^+)$, for $n/2<p_0<p<\infty$. Suppose $F$ satisfies Assumptions \ref{assump_F} and \ref{assump_continuityop}.
    Suppose further that solutions to the homogeneous counterpart of \eqref{eq_coimbra}, driven by the fixed coefficients operator $F(M,x_0)$, are in $C^{1,1}_{\rm loc}\left(B_{1/2}^+(x_0)\right)$, for every $x_0\in B_{1/2}^+$, with estimates. Then $u\in W^{2,p}(B_1^+)$. In addition, there exists $C>0$ such that
    \[
        \left\|u\right\|_{W^{2,p}\left(B_{1/2}^+\right)}\leq C\left(\left\|u\right\|_{L^\infty(B_1^+)}+\left\|f\right\|_{L^p(B_1^+)}\right).
    \]
\end{proposition}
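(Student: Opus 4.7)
The plan is to implement the boundary-adapted Caffarelli $W^{2,p}$ scheme. The strategy rests on a quantitative dichotomy at every dyadic scale $r$ around each point $x_0\in B_{1/2}^+$: either the quadratic behaviour of $v$ is captured by a polynomial with universally controlled coefficients, or the scaled $L^p$ norms of $f$ force $x_0$ into an exceptional set whose Lebesgue measure can be explicitly estimated.

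First I would establish a \emph{boundary approximation lemma}. For every $\eta>0$ there is $\delta>0$ such that, if $v$ is a normalised $L^p$-viscosity solution of \eqref{eq_coimbra} with $r^{2-n/p}\|f\|_{L^p(B_r^+(x_0))}\le\delta$ and $|F(M,x)-F(M,x_0)|\le\delta\|M\|$ throughout $B_r^+(x_0)$ for every $M\in S(n)$, then $v$ is $\eta$-close in $L^\infty(B_{r/2}^+(x_0))$ to the $L^p$-viscosity solution $h$ of the frozen homogeneous problem $F(D^2h,x_0)=0$ in $B_r^+(x_0)$ matching $v$ on $\partial B_r^+(x_0)$. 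Existence of $h$ follows from Perron's method, whereas the closeness is obtained by a compactness/contradiction argument, the Aleksandrov--Bakelman--Pucci bound of Lemma \ref{lem_abp}, and Assumption \ref{assump_continuityop} to make the coefficient perturbation vanish.

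Next, the hypothesis on the frozen operator supplies a boundary $C^{1,1}$ estimate for $h$: there is a quadratic polynomial $P$ vanishing on $B_r'(x_0)\cap\{x_n=0\}$, with universally controlled coefficients, satisfying $\|h-P\|_{L^\infty(B_{\rho}^+(x_0))}\le C(\rho/r)^2$ for every $\rho\le r/4$. Composing with the approximation lemma and iterating geometrically at a fixed ratio $\tau\in(0,1)$, I would construct for each $x_0$ a sequence of quadratics $\{P_k\}_{k\in\N}$ with uniformly controlled coefficients and $\|v-P_k\|_{L^\infty(B_{\tau^k}^+(x_0))}\le\tau^{2k}$, except when $x_0$ lies in the set where the scaled $L^p$ norms of $f$ exceed the threshold $\delta$. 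Such a polynomial decomposition yields a pointwise control of the Hessian of $v$ on the good set.

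Finally, the dichotomy is precisely the input required to run a boundary Calder\'on--Zygmund/maximal-function argument: one estimates the distribution function of the second-order maximal function of $v$ through nested dyadic half-cubes adapted to the flat face, and integrates in $p$ to produce the announced $W^{2,p}$ bound. The main obstacle lies in the boundary geometry, since the decomposition must respect the flat part of $\partial B_1^+$ at every scale and the polynomial $P$ furnished by the frozen problem must vanish on $B_r'(x_0)$ uniformly in $x_0$. Both difficulties are handled by the scale invariance of the upper hemisphere under the rescaling $v_r(y)=r^{-2}v(x_0+ry)$, which preserves both the flat boundary and the class of uniformly elliptic equations, while Assumption \ref{assump_continuityop} renders the coefficient oscillation arbitrarily small at sufficiently small scales.
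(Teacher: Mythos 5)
Your outline is essentially the standard boundary Caffarelli scheme—approximation by the frozen homogeneous problem, the $C^{1,1}$ model estimate, the paraboloid/polynomial dichotomy with an exceptional set controlled by the scaled $L^p$ norms of $f$, and a covering/maximal-function estimate of the distribution function—which is precisely the argument of the cited reference \cite[Theorem~4.5]{Win2009}; the paper itself does not reprove this proposition but simply invokes that result. So your proposal is correct in approach and coincides with the (cited) proof rather than offering a different route.
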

By requiring $F=F(M,x)$ to be differentiable with respect to $M$, we access regularity estimates in $B_1^\mu$ for some universal $0<\mu\ll1$.

\begin{proposition}[$C^{2,\alpha}$-regularity estimates, \protect{\cite[Theorem 1.3]{SS}}]\label{prop_newthm32}
    Let $u\in C(B_1^+)$ be an $L^p$-viscosity solution to \eqref{eq_coimbra}, with $f\equiv 0$. Suppose Assumptions \ref{assump_F}, \ref{assump_FC1}, and \ref{assump_continuityop} hold. Then there exists $\alpha=\alpha(n,\lambda,\Lambda,\beta)$ and $\mu>0$ such that $u\in C^{2,\alpha}(B_1^\mu)$. Moreover, for a universal constant $C>0$, we have
    \[
        \left\|u\right\|_{C^{2,\alpha}(B_1^\mu)}\leq C\left\|u\right\|_{L^\infty(B_1^+)}.
    \]
    Here, $0<\mu\ll1$ depends on the dimension $n$, the ellipticity constants $0<\lambda\leq \Lambda$, $\beta\in(0,1)$ and the modulus of continuity $\omega_F$.
\end{proposition}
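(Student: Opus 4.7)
The plan is to run a perturbation and compactness scheme at the flat boundary, in the spirit of Caffarelli's interior $C^{2,\alpha}$ iteration but with the Evans--Krylov input (which would require convexity of $F$) replaced by Savin's small-perturbation theorem for $C^1$ operators. For each $x_0 \in B_{1/2}'$ I would construct, inductively on $k \geq 0$, quadratic polynomials $P_k$ vanishing on $\{x_n = 0\}$ with $F(D^2 P_k, x_0) = 0$ such that
\[
\sup_{B^+_{\rho^k}(x_0)} |u - P_k| \leq \rho^{k(2+\alpha)}, \qquad \|D^2 P_k - D^2 P_{k-1}\| \leq C \rho^{k\alpha},
\]
for universal constants $\rho \in (0,1)$ and $\alpha \in (0,\beta)$. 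Summing the polynomial increments and unwinding the scaling produces the pointwise $C^{2,\alpha}$ expansion of $u$ at each such boundary point; combining this with Savin's interior $C^{2,\alpha}$ estimate applied on balls $B_{c x_n}(x_0 + x_n e_n)$ for small $x_n > 0$ then yields the $C^{2,\alpha}(B_{1/2}^\mu)$ bound, with $\mu$ chosen so that the boundary and interior scales overlap.

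The inductive step rests on two ingredients. The first is a constant-coefficient flat-boundary estimate: any normalized $L^p$-viscosity solution $w$ of $F(D^2 w, x_0) = 0$ in $B_1^+$ with $w = 0$ on $B_1'$ admits a quadratic polynomial $Q$ vanishing on $\{x_n = 0\}$, with $F(D^2 Q, x_0) = 0$ and $\sup_{B^+_\rho} |w - Q| \leq C \rho^{2+\alpha}$. This is obtained by noting that tangential incremental quotients of $w$ satisfy a linear uniformly elliptic equation (thanks to Assumption \ref{assump_FC1}), propagating their regularity to the flat boundary via Krylov-type barriers that exploit the vanishing Dirichlet data, and recovering the normal second derivative from the equation by uniform ellipticity. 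The second ingredient is stability of $L^p$-viscosity solutions under perturbations: rescaling
\[
u_k(y) := \rho^{-k(2+\alpha)}\bigl[u(x_0 + \rho^k y) - P_k(x_0 + \rho^k y)\bigr]
\]
produces a normalized solution of an equation whose driving operator differs from $F(\cdot, x_0)$ by an $L^\infty$ error of order $\rho^{k(\beta - \alpha)}$, by Assumption \ref{assump_continuityop} together with the uniform control of $\|D^2 P_k\|$ maintained along the iteration. Choosing $\alpha < \beta$ and $\rho$ universally small, a compactness argument allows one to compare $u_k$ with a solution of the frozen problem, to which the constant-coefficient estimate applies; this delivers the update $P_{k+1} := P_k + \rho^{k\alpha} Q$ and closes the induction.

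The main obstacle is the constant-coefficient boundary estimate itself. In the absence of convexity one has no access to Evans--Krylov, so $C^{2,\alpha}$ up to the flat boundary for the frozen operator must be extracted through a more delicate interplay between linear boundary theory for tangential derivatives, barrier constructions, and Savin's small-perturbation theorem. The smallness threshold required by Savin's hypothesis is precisely what fixes the thickness $\mu$ of the strip on which the iteration closes, and from which the dependence of $\mu$ on $\omega_F$ and on $\beta$ naturally emerges.
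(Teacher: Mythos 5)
First, a point of reference: the paper does not prove Proposition \ref{prop_newthm32} at all; it is imported verbatim from \cite[Theorem 1.3]{SS}, so your attempt has to be measured against the proof given there. Your overall two-layer architecture is in fact the same as in \cite{SS}: a pointwise second-order expansion at each point of the flat boundary, combined with Savin's small-perturbation theorem (this is exactly where Assumption \ref{assump_FC1} enters) applied on interior balls of radius comparable to $x_n$, the smallness threshold of Savin's theorem being what fixes the universal strip width $\mu$. That part of your plan is correct and faithfully reconstructs the cited argument.

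The genuine gap is the ingredient you yourself call ``the main obstacle'': the flat-boundary $C^{2,\alpha}$ estimate for the frozen operator, and your sketch for it does not close. Tangential incremental quotients of $w$ satisfy a linear equation with merely \emph{measurable} coefficients; this follows from uniform ellipticity alone (they lie in the Pucci class), not from Assumption \ref{assump_FC1}, and differentiability of $F$ buys nothing here because $D^2w$ is not known to be continuous. Consequently the regularity available for them is interior $C^{\alpha}$ (Krylov--Safonov) and pointwise boundary $C^{1,\alpha}$ with zero data (Krylov), i.e.\ control of the mixed derivatives $D_{\tau i}w$ on the flat boundary -- strictly weaker than the expansion with rate $\rho^{2+\alpha}$ your iteration consumes. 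In particular you cannot simply ``recover $\partial_{nn}w$ from the equation'': ellipticity determines the value the $nn$-entry must take, but not that $w$ actually admits a second-order expansion in the normal direction with a H\"older rate, and establishing exactly that is the content of \cite[Theorem 1.2]{SS}. There it is proved by a dyadic improvement scheme on $u$ minus admissible polynomials of the form $x_n\,\ell(x)+\tfrac{c}{2}x_n^2$ (the tangential Hessian vanishes on $\{x_n=0\}$, which is why no Evans--Krylov input is needed), using Krylov's $w/x_n\in C^{\alpha}$ lemma and comparison -- and, notably, requiring no differentiability of $F$ whatsoever; $C^1$-regularity of $F$ is needed only to pass from the pointwise boundary estimate to the strip estimate via Savin. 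A secondary, fixable issue: in your freezing step the error coming from Assumption \ref{assump_continuityop} is proportional to $\|D^2u\|$, which is not a priori bounded, so the claimed $O(\rho^{k(\beta-\alpha)})$ closeness must be implemented through the standard $L^n$-smallness of the oscillation measure and stability of $L^p$-viscosity solutions rather than a direct $L^\infty$ comparison of operators.
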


We notice Proposition \ref{prop_newthm32} can be phrased in terms of second-order polynomials. In fact, given $x_0\in B_1^\mu$, it is tantamount to the existence of a second-order polynomial $p_{x_0}$ satisfying
\begin{equation}\label{eq_newthm32a}
    \left\|p_{x_0}\right\|_{L^\infty(B_1^+)}\leq C\hspace{.3in}\mbox{and}\hspace{.2in}F(D^2p_{x_0},x_0)=0,
\end{equation}
with
\begin{equation}\label{eq_newthm32b}
    \left|u(x)-p_{x_0}(x)\right|\leq C\left|x-x_0\right|^\alpha,
\end{equation}
for every $x\in B_r(x_0)$, provided $B_r(x_0)\subset B_1^\mu$.

\section{Boundary BMO estimates}\label{BMOsec}

This section considers $L^p$-viscosity solutions to \eqref{eq_coimbra} and establishes BMO-boundary estimates.
We start by examining points universally close to the fixed boundary. That is, we localise the argument in a strip of universal height $\mu>0$ inside $B_1^+$; see Figure \ref{fig_bmo}.

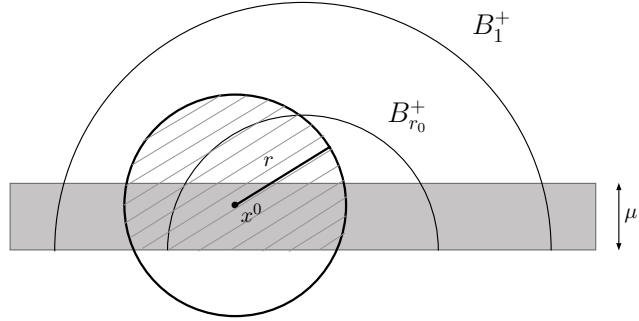
\begin{figure}
\centering
\psscalebox{0.75 0.75}
{
\begin{pspicture}(0,-2.9043834)(12.178064,2.9043834)
\definecolor{colour0}{rgb}{0.7764706,0.76862746,0.76862746}
\definecolor{colour2}{rgb}{0.43137255,0.42352942,0.42352942}
\definecolor{colour1}{rgb}{0.5882353,0.57254905,0.57254905}
\psframe[linecolor=colour2, linewidth=0.02, fillstyle=solid,fillcolor=colour0, dimen=outer](10.4,-0.5043833)(0.0,-1.7043834)
\psarc[linecolor=black, linewidth=0.02, dimen=outer](5.2,-1.7043834){2.4}{0.0}{180.10986}
\psarc[linecolor=black, linewidth=0.02, dimen=outer](5.2,-1.7043834){4.4}{0.0}{180.19302}
\pscircle[linecolor=black, linewidth=0.04, dimen=outer](4.0,-0.90438336){2.0}
\psdots[linecolor=black, dotsize=0.12](4.0,-0.90438336)
\psline[linecolor=black, linewidth=0.02, arrowsize=0.05291667cm 2.0,arrowlength=1.4,arrowinset=0.0]{<->}(10.8,-0.5043833)(10.8,-1.7043834)
\psline[linecolor=black, linewidth=0.04](4.0,-0.90438336)(5.6903224,0.14077795)
\psline[linecolor=colour1, linewidth=0.01](2.356774,0.22981021)(3.743871,1.0485198)
\psline[linecolor=colour1, linewidth=0.01](2.1696775,-0.18954463)(4.2019353,1.0420682)
\psline[linecolor=colour1, linewidth=0.01](2.06,-0.55728656)(4.595484,0.9711005)
\psline[linecolor=colour1, linewidth=0.01](2.027742,-0.8992221)(4.8858066,0.86142313)
\psline[linecolor=colour1, linewidth=0.01](2.0341935,-1.2282543)(5.1503224,0.7259392)
\psline[linecolor=colour1, linewidth=0.01](2.1567743,-1.466964)(5.376129,0.53884244)
\psline[linecolor=colour1, linewidth=0.01](2.356774,-1.6669639)(5.576129,0.33884245)
\psline[linecolor=colour1, linewidth=0.01](2.7946408,-1.7081684)(5.7189074,0.09617586)
\psline[linecolor=colour1, linewidth=0.01](3.3430278,-1.71462)(5.8414884,-0.19414672)
\psline[linecolor=colour1, linewidth=0.01](3.943028,-1.6823618)(5.9253592,-0.45866284)
\psline[linecolor=colour1, linewidth=0.01](4.478512,-1.71462)(5.9834237,-0.7489854)
\psline[linecolor=colour1, linewidth=0.01](4.9494796,-1.7210716)(5.964069,-1.0909209)
\psline[linecolor=colour1, linewidth=0.01](5.4978666,-1.6888136)(5.944714,-1.3941467)
\rput[bl](10.9,-1.2){$\mu$}
\rput[bl](8.2,2.0){{\Large $B_1^+$}}
\rput[bl](6.7,0.4){{\Large $B_{r_0}^+$}}
\rput[bl](4.1,-1.2){$x^0$}
\rput[bl](4.5,-0.2){$r$}
\end{pspicture}
}
\caption{Our analysis produce a radius $r_0>0$ and a height $\mu>0$ such that $L^n$-strong solutions to \eqref{eq_coimbra} satisfy a universal estimate in spaces $W^{2,{\rm BMO}}$. I.e., the Hessian of $u$ is uniformly bounded in the BMO-norm.}\label{fig_bmo}
\end{figure}
 
We proceed with the statement of the main result in this section.

\begin{theorem}\label{BMOthm} Let u be an $L^n$-viscosity solution of \eqref{eq_coimbra}. Suppose Assumptions \ref{assump_F} and \ref{assump_FC1} are in force. Suppose further that $f \in L^\infty(B_1^+)$. There exist universal constants $C>0$ and $0<\mu,r_0\ll1$ such that, for each $0<r \leq r_0$ and $x^0 \in B_{r}^\mu$, one finds a second-order polynomial $p_{r,x^0}$ satisfying 
$$
F(D^2p_{r,x^0},x^0)=0,
$$
$$
|D^2 p_{2r,x^0}-D^2 p_{r,x^0}| \leq C\max\left\lbrace 1,\|u\|_{L^\infty(B_1^+)}\right\rbrace,
$$ 
and
\begin{equation}\label{BMOest}
\sup\limits_{B_{r}(x^0)\, \cap B^+_{1}}\left|u(x)-p_{r,x^0}(x) \right| \leq C\max\left\lbrace1,\|u\|_{L^\infty(B_1^+)}\right\rbrace\, r^2. 
\end{equation}  
\end{theorem}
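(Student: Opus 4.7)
The plan is to prove Theorem \ref{BMOthm} via an iterative approximation scheme that, at each dyadic scale $\rho^k$ centred at $x^0$, produces a second-order polynomial approximating $u$ up to an error of order $\rho^{2k}$. The essential replacement for Caffarelli's convexity-based $W^{2,p}$ theory is the boundary $C^{2,\alpha}$ estimate of Proposition \ref{prop_newthm32}, which requires only the differentiability assumption \ref{assump_FC1}. A first normalization reduces to $\|u\|_{L^\infty(B_1^+)}\le 1$; the factor $\max\{1,\|u\|_{L^\infty(B_1^+)}\}$ in the statement is then recovered by undoing this step. A preliminary spatial rescaling on balls of radius $r_0\ll 1$ makes $\|f\|_{L^\infty}$ as small as required, without disturbing Assumptions \ref{assump_F} and \ref{assump_FC1}.

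The heart of the argument is a single-scale approximation lemma: there exist universal constants $\rho\in(0,1/2)$, $\delta>0$, and $C^*>0$ such that whenever $\hat F$ satisfies Assumptions \ref{assump_F} and \ref{assump_FC1} with $\hat F(0)=0$, and $v\in C(\overline{B_1^+})$ is an $L^n$-viscosity solution to $\hat F(D^2 v)=g$ with $\|v\|_{L^\infty(B_1^+)}\le 1$ and $\|g\|_{L^\infty(B_1^+)}\le\delta$, one finds a second-order polynomial $q$ with $\hat F(D^2 q)=0$, $|D^2 q|\le C^*$, and $\sup_{B_\rho^+}|v-q|\le\rho^2$. I would prove it by comparing $v$ with the $L^n$-viscosity solution $h$ of the homogeneous Dirichlet problem $\hat F(D^2 h)=0$ in $B_1^+$ with $h=v$ on $\partial B_1^+$: since $v-h$ satisfies Pucci-type inequalities with right-hand side $g$ via \eqref{eq_francesinha}, Lemma \ref{lem_abp} gives $\|v-h\|_{L^\infty}\le C\delta$; meanwhile Proposition \ref{prop_newthm32} furnishes a Taylor polynomial $q$ for $h$ at the origin with $|h-q|(y)$ controlled by $|y|^{2+\alpha}$ in a universal strip. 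Choosing $\rho$ so that the Taylor error is at most $\rho^2/2$ on $B_\rho^+$, and then $\delta$ so that $C\delta\le\rho^2/2$, closes the lemma.

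Iterating from $p_0\equiv 0$, I would construct second-order polynomials $p_k$ satisfying the inductive hypotheses $F(D^2 p_k,x^0)=0$, $|D^2 p_{k+1}-D^2 p_k|\le C^*$, and $\sup_{B_{\rho^k}(x^0)\cap B_1^+}|u-p_k|\le\rho^{2k}$. The inductive step considers $v_k(y):=\rho^{-2k}(u-p_k)(x^0+\rho^k y)$, which is bounded by $1$ on the rescaled domain and solves $\hat F_k(D^2 v_k)=f_k(y)$ with the translated operator $\hat F_k(M):=F(M+D^2 p_k,x^0)$ and $f_k(y):=f(x^0+\rho^k y)$. The operator $\hat F_k$ inherits Assumptions \ref{assump_F} and \ref{assump_FC1} with constants independent of $k$, satisfies $\hat F_k(0)=0$ by the inductive hypothesis, and obeys $\|f_k\|_{L^\infty}\le\delta$ thanks to the preliminary rescaling. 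Applying the approximation lemma delivers a polynomial $q_k$, and I set $p_{k+1}(x):=p_k(x)+\rho^{2k}q_k((x-x^0)/\rho^k)$; the three induction hypotheses propagate. The statement for arbitrary $r\in(0,r_0]$ is read off by taking $k$ with $\rho^{k+1}<r\le\rho^k$ and absorbing constants into a universal $C$.

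The main obstacle is the geometry around $x^0$: the rescaled domain $(B_1^+-x^0)/\rho^k$ exhibits the flat boundary at height $x^0_n/\rho^k$, which passes from the boundary to the interior regime as $k$ grows. When the flat face is visible one uses Proposition \ref{prop_newthm32}; when it is not, one uses the analogous interior $C^{2,\alpha}$ estimate for differentiable uniformly elliptic operators. The choice $\mu\ll 1$ guarantees that the early stages of the iteration fall in the boundary regime, where Proposition \ref{prop_newthm32} is the only available tool and the differentiability assumption \ref{assump_FC1} is indispensable; once the iteration enters the interior regime, the argument reduces to the familiar picture.
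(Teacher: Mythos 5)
Your overall architecture (a one-step polynomial approximation lemma, iteration over dyadic scales $\rho^k$, then selection of $k=k(r)$) is the same as the paper's, but your proof of the key lemma is genuinely different: the paper proves its Proposition \ref{step0} by compactness and contradiction (stability of $L^p$-viscosity solutions plus Proposition \ref{prop_newthm32} applied to the limit), which is why it also assumes smallness of the modulus $\omega_F$ and restores it by scaling, whereas you compare directly with the solution $h$ of the homogeneous Dirichlet problem and invoke Lemma \ref{lem_abp}. Your route is viable and avoids the smallness condition on $\omega_F$, but it requires three ingredients you should make explicit: solvability of the Dirichlet problem for $\hat F$ in the viscosity sense (Perron plus comparison for constant-coefficient uniformly elliptic operators); the fact that $v-h$ satisfies the Pucci inequalities in the $L^n$-viscosity sense (a difference theorem, not automatic when both functions are merely continuous viscosity solutions); and a version of Proposition \ref{prop_newthm32} allowing boundary data on the flat part that is a quadratic polynomial rather than zero, since after the first iteration step the rescaled function equals $-\rho^{-2k}p_k(x^0+\rho^k\,\cdot)$ on the image of $B_1'$ (subtract that polynomial and translate the operator; the paper passes over the same point silently).

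The genuine gap is in your final paragraph. There is no ``analogous interior $C^{2,\alpha}$ estimate for differentiable uniformly elliptic operators'': without convexity, interior $C^{2,\alpha}$ (indeed even interior $C^{1,1}$) for $F(D^2u)=0$ fails in general, by the Nadirashvili--Vl\u{a}du\c{t} counterexamples, and this failure is precisely why the paper's estimates are confined to a strip of universal width $\mu$ near the fixed boundary and why Proposition \ref{prop_newthm32} is the engine of the whole argument. So the interior branch of your dichotomy would collapse. Fortunately it is also unnecessary: in Theorem \ref{BMOthm} the admissible pairs satisfy $x^0\in B_r^\mu\subset B_r^+$, hence $x^0_n\le|x^0|<r$, so the scales you must reach never drop below the distance from $x^0$ to the flat boundary; moreover Proposition \ref{prop_newthm32} is a strip estimate, valid at interior points of height up to $\mu$, so every application of your one-step lemma can be arranged to see the flat boundary. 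Concretely, iterate only while $x^0_n/\rho^k<\mu$, i.e. stop at a scale comparable to $x^0_n/\mu$, which is a bounded number of dyadic steps above the smallest admissible $r$, and absorb the resulting factor of order $\mu^{-2}$ into the universal constant $C$. Replace the interior-regime branch by this observation; as written, that step invokes a tool that does not exist.
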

Theorem \ref{BMOthm} follows from an iterative procedure stemming from Proposition \ref{prop_newthm32}. It relies on the $C^{2,\alpha}$ regularity estimates at points near the flat boundary available in that proposition. To state and prove the building blocks of Theorem \ref{BMOthm}, we consider points $x^0 \in B_{1/2}^\mu$, for $0<\mu\ll1$ yet to be chosen.

\begin{proposition}\label{step0}
Let $u\in C(B_1^+)$ be a normalized $L^p$-viscosity solution to \eqref{eq_coimbra}. Suppose Assumptions \ref{assump_F}, \ref{assump_FC1}, and \ref{assump_continuityop} are in force. Then there exist universal constants $C>0$ and $\varepsilon_0>0$ such that, for $0<\varepsilon \leq \varepsilon_0$ one finds $\delta>0$ for which, if
\begin{equation}\label{smallcond}
\|f\|_{L ^\infty(B_1^+)} \leq \,\delta \quad \mbox{and} \quad \sup_{t>0} \omega(t) \leq \,\delta, 
\end{equation} 
there exists a second-order polynomial $p$ satisfying 
$$
\|p\|_{L^{\infty}(B_{1}^+)} \leq C \quad \mbox{and} \quad F(D^2p,x^0)=0,
$$
with
$$
\sup_{x \in B_{\varepsilon}(x^0) \,\cap \, B_{1}^+}|u(x)-p(x)| \leq \varepsilon^{2}.
$$
\end{proposition}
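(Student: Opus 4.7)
The plan is a standard compactness/contradiction argument built on top of the boundary $C^{2,\alpha}$-regularity of Proposition \ref{prop_newthm32}. Suppose the conclusion fails for some $\varepsilon>0$. Then I would extract sequences of operators $\{F_k\}$ satisfying Assumptions \ref{assump_F}, \ref{assump_FC1}, \ref{assump_continuityop} uniformly, right-hand sides $\{f_k\} \subset L^\infty(B_1^+)$ with $\|f_k\|_{L^\infty} \to 0$, moduli $\omega_k$ with $\sup_{t > 0} \omega_k(t) \to 0$, and normalized $L^p$-viscosity solutions $u_k$ of $F_k(D^2 u_k, x) = f_k$ in $B_1^+$ with $u_k = 0$ on $B_1'$, such that no admissible second-order polynomial $p$ satisfies $\sup_{B_\varepsilon(x^0) \cap B_1^+} |u_k - p| \leq \varepsilon^2$ together with the required algebraic constraints.

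Next I would pass to a uniform limit. Using the ABP inequality and the vanishing trace on $B_1'$, normalized $L^p$-viscosity solutions enjoy uniform Hölder estimates up to the flat boundary, so by Arzelà--Ascoli a subsequence of $\{u_k\}$ converges uniformly on $\overline{B_1^+}$ to some $u_\infty \in C(\overline{B_1^+})$ with $u_\infty|_{B_1'} = 0$. Assumption \ref{assump_FC1} combined with uniform ellipticity guarantees that $\{F_k\}$ is uniformly Lipschitz on compact subsets of $S(n)$; together with the vanishing spatial modulus, this produces a further subsequence converging locally uniformly to a $(\lambda, \Lambda)$-elliptic, $C^1$, fixed-coefficients operator $F_\infty: S(n) \to \mathbb{R}$. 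Stability of $L^p$-viscosity solutions under such convergence (see \cite{CCKS}) then yields that $u_\infty$ is an $L^n$-viscosity solution of $F_\infty(D^2 u_\infty) = 0$ in $B_1^+$ with $u_\infty = 0$ on $B_1'$. Proposition \ref{prop_newthm32} applies at $x^0 \in B_{1/2}^\mu$ and, in its polynomial formulation \eqref{eq_newthm32a}--\eqref{eq_newthm32b}, furnishes a second-order polynomial $p_\infty$ with $\|p_\infty\|_{L^\infty(B_1^+)} \leq C$, $F_\infty(D^2 p_\infty) = 0$, and a tangential bound $|u_\infty(x) - p_\infty(x)| \leq C|x - x^0|^{2 + \alpha}$ in a universal neighbourhood of $x^0$.

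Finally, to close the contradiction I would need a polynomial $p_k$ satisfying the $k$-th algebraic constraint $F_k(D^2 p_k, x^0) = 0$, obtained as a small perturbation of $p_\infty$. Since $F_k(D^2 p_\infty, x^0) \to F_\infty(D^2 p_\infty) = 0$ and $DF_k$ is uniformly non-degenerate by ellipticity, an implicit function argument would produce a symmetric matrix $E_k$ with $\|E_k\| \to 0$ such that $p_k(x) := p_\infty(x) + \tfrac{1}{2}\langle E_k (x - x^0), x - x^0 \rangle$ is admissible. Choosing $\varepsilon_0$ so that $C \varepsilon_0^\alpha < \tfrac{1}{4}$ and combining the uniform closeness $\|u_k - u_\infty\|_{L^\infty} \to 0$, the tangential bound, and the vanishing quadratic correction would deliver $\sup_{B_\varepsilon(x^0) \cap B_1^+} |u_k - p_k| \leq \varepsilon^2$ for large $k$, contradicting the failure hypothesis. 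I expect the main obstacle to be precisely this perturbation step: preserving the algebraic constraint $F_k(D^2 p_k, x^0) = 0$ while keeping both the $L^\infty$-bound and the approximation inequality is what forces Assumption \ref{assump_FC1}, which is the structural input that replaces the convexity hypothesis of \cite{FS14}.
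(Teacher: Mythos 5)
Your proposal is correct and follows essentially the same route as the paper: a compactness/contradiction argument, uniform convergence to a solution of a constant-coefficient limit equation, the boundary $C^{2,\alpha}$ estimate of Proposition \ref{prop_newthm32} applied to the limit, and a small perturbation of the limiting polynomial (the paper corrects along the identity, $a_k I$ with $a_k\to 0$, rather than via an implicit-function argument, but this is the same idea) to restore the constraint $F_k(D^2p_k,x^0)=0$ before closing with the triangle inequality and a universal choice of $\varepsilon_0$.
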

\begin{proof}
We argue by contradiction and split the proof into three steps for an easy presentation. 

\medskip

\noindent{\bf Step 1 - }Suppose the proposition statement is false. Hence, there exist $\varepsilon_0>0$ and sequences $(F_k)_{k\in\mathbb{N}}$, $(\omega_k)_{k\in\mathbb{N}}$, $(u_k)_{k\in\mathbb{N}}$, and $(f_k)_{k\in\mathbb{N}}$ such that $u_k$ is a normalized $L^p$-viscosity solution to \eqref{eq_coimbra} driven by $F_k$, with
$$
\max\left\lbrace\sup\limits_{t>0}\omega_k(t),\|f_k\|_{L^\infty(B_1^+)}\right\rbrace \leq \frac 1k.
$$
but
\begin{equation}\label{contradiction}
\sup_{\substack{\|p\|_{L^{\infty}} \leq 2C \\ F_k(D^2 p)=0}} \left[ \sup_{x \in B_{\varepsilon_0}(x^0)\, \cap B_{3/4}^+}|u_k(x)- p(x)| \right] > \varepsilon_0^2
\end{equation}
for every $k\in\mathbb{N}$, where $C>0$ is given as in \eqref{eq_newthm32a}-\eqref{eq_newthm32b}.

\vspace{.2in}

\noindent{\bf Step 2 - }Applying uniform global estimates to $u_k$, as stated in \cite[Proposition 2.6]{SS}, we extract a subsequence still denoted with $(u_k)_{k\in\mathbb{N}}$ which converges uniformly in $B^+_{3/4}$ to some $u_\infty\in C(B_{3/4}^+)$. Standard stability results in the theory of $L^p$-viscosity solutions ensure
\begin{equation}\label{unifeq2}
\begin{cases}
F_\infty(D^2 u_\infty)= 0 & \hspace{.2in}\mbox{in}\hspace{.2in} B_{3/4}^+ \\
u_\infty=0 & \hspace{.2in}\mbox{on} \hspace{.2in}B_{3/4}', 
\end{cases}
\end{equation}
where $F_\infty$ is a constant coefficients operator, satisfying Assumptions \ref{assump_F} and \ref{assump_FC1}. Hence, Proposition \ref{prop_newthm32} implies that, for
$$
0 < \varepsilon_0 \leq \left(\frac{1}{3C}\right)^{\frac{1}{\alpha}},
$$
we have
$$
\sup\limits_{B_{\varepsilon_0}(x^0)} |u_\infty(x)-p_{x^0}(x)| \leq \frac{1}{3}\varepsilon_0^{2},
$$
for some quadratic polynomial $p_{x^0}$ satisfying 
\[
    \|p_{x^0}\|_{L^\infty(B_{3/4}^+)} \leq C
\]
and
\[
F_\infty(D^2p_{x^0})=0\hspace{.2in}\mbox{in} \hspace{.2in}B_{1/2}^+.
\]

\vspace{.2in}

\noindent{\bf Step 3 - }Recall that Assumptions \ref{assump_F}, \ref{assump_FC1} and \ref{assump_continuityop} are in force for $F_k$, for every $n\in\mathbb{N}$. We also have $F_n(0,x)=0$ for every $n\in\mathbb{N}$. Hence, there exists a sequence of real numbers $(a_k)_{k\in\mathbb{N}}$ with $a_k \to 0$ such that 
\[
F_k(D^2p_n+a_kI,x^0)=0.
\]      
From this and the fact that $u_k \to u_\infty$ uniformly in $B_{3/4}^+$, we have for $k  \gg 1$, that
\begin{equation}
\begin{split}
\sup\limits_{B_{\varepsilon_0}(x^0)\, \cap B_{3/4}^+}|u_k-p_k| & \leq \sup\limits_{B_{\varepsilon_0}(x^0) \cap B_{3/4}^+}|u_\infty-p_{x^0}| \\
 &\quad + \sup\limits_{B_{3/4}^+}|u_k-u_\infty| + \sup\limits_{B_{3/4}^+}|p_k(x) - p(x)| \\
 & \leq  \frac{1}{3}\varepsilon_0^{2}+\frac{1}{3}\varepsilon_0^{2}+\frac{1}{3}\varepsilon_0^{2}.
\end{split}
\end{equation}  
The former inequality yields a contradiction in light of \eqref{contradiction} and completes the proof.
\end{proof}
 
\begin{proposition}\label{propk}
Let $u\in C(B_1^+)$ be a normalized $L^p$-viscosity solution to \eqref{eq_coimbra}. Suppose Assumptions \ref{assump_F}, \ref{assump_FC1}, and \ref{assump_continuityop} are in force.  There exist universal constants $C>0$, $\delta_0>0$ and $\rho>0$, such that if 
\begin{equation}\label{smallcond2}
\left\|f\right\|_{L ^\infty(B_1^+)} + \sup\limits_{0<t\le 1}\omega(t) \leq \delta_0   
\end{equation}  
then for each $k \in \mathbb{N}$ there is a second-order polynomial $p_k$, satisfying 
\[
\left|D^2p_k-D^2p_{k-1}\right| \leq C 
\]
and
\[
F(D^2p_k,x^0)=0,
\]
with
\begin{equation}\label{finaleq}
\sup_{x \in B_{\rho^k}(x^0)\, \cap B_{1}^+}|u(x)- p_k(x)| \leq \rho^{2k}.
\end{equation}
\end{proposition}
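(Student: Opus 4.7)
My plan is to proceed by induction on $k$. The base case $k=1$ is immediate from Proposition \ref{step0}: setting $p_0 \equiv 0$, choosing $\rho := \varepsilon_0$ and $\delta_0 := \delta$ as provided by that proposition, the polynomial $p_1$ is exactly the one it delivers.

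For the inductive step, suppose polynomials $p_0, p_1, \ldots, p_k$ have been constructed. I introduce the normalized rescaling
$$
v_k(y) := \frac{u(x^0 + \rho^k y) - p_k(x^0 + \rho^k y)}{\rho^{2k}},
$$
for $y$ in the natural rescaled subset of $B_1$. By the inductive estimate \eqref{finaleq}, $\|v_k\|_{L^\infty} \le 1$, and a direct computation shows that $v_k$ is an $L^p$-viscosity solution of $F_k(D^2 v_k, y) = g_k(y)$, where
$$
F_k(M, y) := F\bigl(M + D^2 p_k,\, x^0 + \rho^k y\bigr) - F\bigl(D^2 p_k,\, x^0 + \rho^k y\bigr),
$$
$$
g_k(y) := f(x^0 + \rho^k y) - F\bigl(D^2 p_k,\, x^0 + \rho^k y\bigr).
$$
The operator $F_k$ inherits uniform ellipticity with the same constants $(\lambda, \Lambda)$, satisfies $F_k(0, y) = 0$, and preserves the modulus $\omega_F$ of Assumption \ref{assump_FC1} in the matrix argument. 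Its modulus of continuity in $y$ carries a factor $\rho^{k\beta}$ coming from Assumption \ref{assump_continuityop}. Using the inductive identity $F(D^2 p_k, x^0) = 0$, Assumption \ref{assump_continuityop}, and the telescoping bound $\|D^2 p_k\| \le C k$ derived from the inductive Hessian increments, I obtain
$$
\|g_k\|_{L^\infty} \le \|f\|_{L^\infty(B_1^+)} + C \rho^{k\beta} \|D^2 p_k\| \le \delta_0 + C' k \rho^{k\beta}.
$$

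Choosing $\rho$ sufficiently small so that $C' k \rho^{k\beta} \le \delta_0$ for every $k \in \mathbb{N}$ (possible since $k \rho^{k\beta} \to 0$), both the smallness of the right-hand side and of the continuity modulus of $F_k$ meet the threshold required by Proposition \ref{step0}. Applying that proposition to $v_k$ with operator $F_k$ yields a second-order polynomial $\tilde p$ with $\|\tilde p\|_{L^\infty(B_1^+)} \le C$, $F_k(D^2 \tilde p, 0) = 0$, and $\sup_{B_\rho(0) \cap B_1^+} |v_k - \tilde p| \le \rho^2$. Setting
$$
p_{k+1}(x) := p_k(x) + \rho^{2k} \tilde p\bigl((x - x^0)/\rho^k\bigr),
$$
one checks that $|D^2 p_{k+1} - D^2 p_k| = |D^2 \tilde p|$ is universally bounded, that $F(D^2 p_{k+1}, x^0) = F_k(D^2 \tilde p, 0) + F(D^2 p_k, x^0) = 0$ by construction of $F_k$, and that undoing the scaling converts the $v_k$-estimate into the desired approximation at scale $\rho^{k+1}$. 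The principal difficulty will be controlling the potentially linear growth $\|D^2 p_k\| \lesssim k$ so that the smallness hypotheses of Proposition \ref{step0} are preserved through every iteration: this is absorbed by the geometric factor $\rho^{k\beta}$, and it is precisely this trade-off that forces the final (universal) choice of $\rho$ small relative to the exponent $\beta$ from Assumption \ref{assump_continuityop}.
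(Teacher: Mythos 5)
Your induction scheme---rescale to $v_k$, invoke Proposition \ref{step0} at unit scale, and add the resulting quadratic correction to $p_k$---is exactly the paper's argument; the base case, the identity $F(D^2p_{k+1},x^0)=F_k(D^2\tilde p,0)+F(D^2p_k,x^0)=0$, the bounded Hessian increments, and the rescaled estimate at radius $\rho^{k+1}$ all match the paper's Steps 1--2. The only genuine divergence is your treatment of the $x$-dependence of $F$, and that is where the proposal has a gap.

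By subtracting $F(D^2p_k,x^0+\rho^k y)$ you force $F_k(0,y)=0$, but this pushes the error $|F(D^2p_k,x^0+\rho^k y)-F(D^2p_k,x^0)|\le C\rho^{k\beta}\|D^2p_k\|\lesssim k\rho^{k\beta}$ into the right-hand side $g_k$, and you propose to beat it by ``choosing $\rho$ sufficiently small so that $C'k\rho^{k\beta}\le\delta_0$ for every $k$''. This is circular: in Proposition \ref{step0} the tolerance $\delta$ is produced only after the approximation scale $\varepsilon$ is fixed, so in the iteration $\delta_0=\delta(\rho)$; you cannot shrink $\rho$ to meet a threshold that itself depends on $\rho$, and since $\delta(\cdot)$ comes from a compactness argument there is no quantitative lower bound available (note also that for $\rho\le e^{-1/\beta}$ one has $\sup_{k\ge1}k\rho^{k\beta}=\rho^{\beta}$, so your requirement is $C'\rho^{\beta}\le\delta(\rho)$, which nothing guarantees; ``possible since $k\rho^{k\beta}\to0$'' conflates decay in $k$ with uniformity over all $k$). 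The paper avoids the issue by not performing the subtraction: it keeps $f_j(x)=f(\rho^j x+x^0)$ and $F_j(M,x)=F(M+D^2p_j,\rho^j x+x^0)$, so $\|f_j\|_{L^\infty}\le\|f\|_{L^\infty}\le\delta_0$ trivially; in the constant-coefficient setting of the main theorem $F(D^2p_j,x^0+\rho^j y)=F(D^2p_j)=0$ by induction, so your extra term vanishes identically and the two formulations coincide. If you wish to keep your formulation in the variable-coefficient case, the correct repair is to fix $\rho$ first (any $\rho\le\varepsilon_0$), obtain $\delta(\rho)$ from Proposition \ref{step0}, and then shrink the universal constant $\delta_0$ in \eqref{smallcond2}---which is chosen \emph{after} $\rho$ and also controls $\sup_t\omega(t)$, hence the coefficient oscillation acting on the fixed matrix $D^2p_k$---so that $\delta_0+\sup_k\min\{\delta_0 Ck,\,C^2k\rho^{k\beta}\}\le\delta(\rho)$; shrinking $\rho$ is the wrong knob.
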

\begin{proof}
We argue by induction in $k\in\mathbb{N}$. As before, we split the proof into two steps.

\vspace{.2in}

\noindent{\bf Step 1 - }Set $p_0=p_{-1}=0$; the case $k=0$ follows because $u$ is a normalized solution. Suppose the thesis of the proposition has been verified for $k=j$. We proceed by verifying it holds for $k=j+1$. Indeed, we define  
$$
u_j(x)\coloneqq\frac{u(\rho^jx+x^0)-p_l(\rho^jx+x^0)}{\rho^{2j}},
$$
for $x\in B_1^+$.
The induction hypothesis ensures $u_j$ is normalised. Notice it solves
$$
F_j(D^2u_j,x)=f_l \hspace{.2in} \mbox{in}\hspace{.2in} B_1^+, 
$$
for $F_j(M,x)=F(M+D^2 p_j,\rho^jx+x^0)$, $f_j\coloneqq f(\rho^jx+x^0)$ and $\omega_j(t)\coloneqq\omega(\rho^j  t)$. The new modulus of continuity $\omega_k$ still falls within the scope of \ref{assump_FC1}. Also, \ref{assump_F} and \ref{assump_continuityop} remain in force at the $j$-th level. Finally, we observe that
\[
\left\|f_j\right\|_{L^\infty(B_1^+)} \leq \left\|f\right\|_{L^\infty(B_1^+)} \leq \delta_0
\]
and
\[
\sup\limits_{t>0}\omega_j(t) \leq \sup\limits_{t>0}\omega(\rho\,t) \leq \delta_0.
\]

\vspace{.2in}

\noindent{\bf Step 2 - }Take $\delta_0=\delta$ depending on $\varepsilon_0=\rho$, as in Proposition \ref{step0}. We conclude there exists a polynomial 
\[
    p(x)=\frac 12 x^TAx+ {\rm b}\cdot x+ \kappa,
\]
with universally bounded coefficients $A \in \mathbb{R}^{n\times n}$, ${\rm b} \in \mathbb{R}^n$, and $\kappa \in \mathbb{R}$, such that $F_j(A,0)=0$ and
$$
\sup_{B_{\rho}(x_j)\, \cap B^+_{3/4}}|u_j(x)- p(x)| \leq \rho^{2}
$$
for $x_j\coloneqq\rho^{-j} x^0$. It implies that
$$
\sup_{B_{\rho^{j+1}}(x^0)\, \cap B_{3/4}^+}|u(x)- p_{j+1}(x)| \leq \rho^{2(j+1)},
$$
for  
\[
    p_{j+1}(x)\coloneqq\overline p(x)+p_j(x)
\]
where 
\[
\overline p(x)\coloneqq\frac 12 x^TAx+\rho^j{\rm b} x+ \rho^{2j}\kappa.
\] 
In addition, we note that
$$
F(D^2 p_{j+1},x^0) = F(A + D^2p_j,x^0)=F_j(A,0)=0;
$$
a further application of Proposition \ref{step0} ensures
$$
|D^2p_{j+1}-D^2p_{j}| = |A| \leq C,
$$ 
which completes the induction argument and ends the proof.
\end{proof}
Now, we are in a position to detail the proof of Theorem \ref{BMOthm}. 

\begin{proof}[Proof of Theorem \ref{BMOthm}]
We split the proof into three steps.

\vspace{.2in}

\noindent{\bf Step 1 - }We start with an application of Proposition \ref{propk}. To that end, we proceed with a scaling argument. For parameters $\tau$ and $\kappa$, to be chosen universally, define
$$
\overline{u}(x)\coloneqq \kappa \cdot u(\tau x) \quad \mbox{in } \; B_1^+.
$$ 
Notice $\overline u$ solves
$$
\overline F(D^2 \overline u, x )=\overline f(x) \hspace{.2in} \mbox{in}\hspace{.2in} \quad B_{1}^+
$$
where 
\[
\overline F(M,x)\coloneqq\kappa \tau^2\cdot F([\kappa\tau^2]^{-1} \cdot M, \tau x)
\]
and
\[
\overline f(x)=k\tau^2f(\tau x).
\]
Note that $\overline F$ satisfies Assumptions \ref{assump_F}, \ref{assump_FC1}, and \ref{assump_continuityop}. Set
$$
\kappa \leq \min\left\{1,\|u\|_{L^{\infty}(B_1^+)}^{-1}\right\} \quad \mbox{and} \quad \tau \leq \sqrt{\min\left\{\delta\|f\|_{L^{\infty}(B_1^+)}^{-1},\omega^{-1}(\delta)\right\}}, 
$$
to ensure that $\overline{u}$ is a normalized solution and \eqref{smallcond2} is in force. 

\vspace{.2in}

\noindent{\bf Step 2 - }In this setting, we resort to Proposition \ref{propk}, obtaining \eqref{finaleq} centred at $\tau^{-1}x^0$. That is, one obtains a sequence of second-order polynomials $(p_k)_{k\in\mathbb{N}}$, such that
$$
|D^2 p_k-D^2 p_{k-1}| \leq C \quad \mbox{and} \quad \overline F(D^2 p_k,\tau^{-1}x^0)=0.
$$
for every $k\in\mathbb{N}$. Moreover, for the universal radius $\rho>0$, we ensure
\begin{equation*}
\begin{split}
    \kappa \sup\limits_{x \in B_{\tau\rho^k}(x^0)\, \cap B_{1}^+}|u(x)- \tilde p_k(x)|&\leq \kappa \sup\limits_{x \in B_{\tau\rho^k}(x^0)\, \cap B_{\tau}^+}|u(x)- \tilde p_k(x)| \\
        &\leq \sup\limits_{x \in B_{\rho^k}(\tau^{-1} x^0)\, \cap B_{1}^+}|\overline u(x)- p_k(x)| \\
        &\leq \rho^{2k},
\end{split}
\end{equation*}
where $\tilde p_k(x)\coloneqq\kappa^{-1} p_k(\tau^{-1} x)$. We also note that
$$
F(D^2 \tilde p_k,x^0)=\tau^{-2}\kappa^{-1}\overline F(D^2 p_k,\tau^{-1}x^0)=\tau^{-2}\kappa^{-1}\overline F(D^2p_k,\tau^{-1}x^0)=0.
$$

\vspace{.2in}

\noindent{\bf Step 3 - }Finally, for $0<r<\tau\rho$, we select $p_{r,x^0}\coloneqq\tilde p_k$ where $k=k(r)$ is the positive integer such that 
$$
\tau\rho^{k+1} < r \leq \tau\rho^k.
$$
Hence,
\begin{equation}\nonumber
\begin{array}{rcl}
\sup\limits_{x \in B_{r}(x^0)\, \cap B_1^+}|u(x)- p_{r,x^0}(x)| & \leq & \sup\limits_{x \in B_{\tau \rho^k}(x^0)\, \cap B_1^+}|u(x)- \tilde p_k(x)| \\[0.6cm]
 & \leq & \kappa^{-1}\rho^{2k}\\[0.3cm]
 & \leq & (\tau^{-2}\rho^{-2}\kappa^{-1})\tau^2\rho^{2(k+1)}\\[0.3cm]
 & \leq & C\kappa^{-1}r^{2}.  
\end{array}
\end{equation}
In addition, for any 
\[
0<r<\frac{1}{2}\tau\rho,
\]
we also obtain
\begin{equation}
    \begin{split}
        \sup\limits_{x \in B_r(x^0)\, \cap B_1^+}\left|p_{2r,x^0}-p_{r,x^0}\right| & \leq \sup\limits_{x \in B_{2r}(x^0)\, \cap B_1^+}\left|u-p_{2r,x^0}\right|\\
            &\quad +\sup\limits_{x \in B_r(x^0)\, \cap B_1^+}\left|u-p_{r,x^0}\right|\\
            & \leq \overline Cr^2 
\end{split} 
\end{equation}
for some universal $\overline C>0$. Since $\|p_{r,x^0}\|_{L^\infty(B_1^+)}$ is universally bounded, one gets
\begin{equation}
|D^2 p_{2r,x^0}-D^2 p_{r,x^0}| \leq C,
\end{equation}
for some $C>0$. By noticing that such a constant is universal, one completes the proof.
\end{proof}

\medskip
 
\section{Proof of Theorem \ref{mainthm}} \label{mainsec}

In this section, we detail the proof of our main result. We follow ideas in \cite{FS14}, adapting them carefully for the boundary scenario. To simplify our arguments, we rewrite Theorem \ref{BMOthm} in terms of polynomials approximating the Hessian of solutions; see \cite[Lemma 2.2]{FS14}.

\begin{corollary}\label{honorato}
Let $u$ be an $L^n$-strong solution to \eqref{maineq}. Let $0<\mu\ll1$ be the universal constant from Theorem \ref{BMOthm}. Fix $x_0\in B_{1/2}^{\mu/2}$ and suppose
\[
	u(x_0)=|Du(x_0)|=0.
\]
Suppose further Assumptions \ref{assump_F} and \ref{assump_FC1} are in force. There exist universal constants $C>0$ and $r_0>0$, such that, for each $0<r\leq r_0$, one finds $p_{r,x_0} \in S(n)$, satisfying $F(p_{r,x_0})=1$, 
\begin{equation}\label{estA}
|p_{2r,x_0}-p_{r,x_0}| \leq C\max\{1,\|u\|_{L^\infty(B_1^+)}\},
\end{equation}
and
\begin{equation}\label{est1}
\sup_{B^+_r(x_0)}\left| u - \frac{1}{2}\langle p_{r,x_0}\, (y-x_0), y-x_0\rangle \right| \leq C r^2.   
\end{equation}
In particular, there holds
\begin{equation}\label{est2}
\sup_{B^+_r(x_0)} |u| \leq (C+|p_{r,x_0}|)r^2.   
\end{equation}
\end{corollary}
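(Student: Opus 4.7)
The strategy is to deduce the corollary from Theorem~\ref{BMOthm} in three stages: a shift that reduces the right-hand side to zero, extraction of the Hessian matrix $p_{r,x_0}$ from the resulting polynomial approximation, and a passage from a full polynomial to a purely quadratic polynomial via the vanishing conditions $u(x_0)=|Du(x_0)|=0$.

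For the first two stages, uniform ellipticity and $F(0)=0$ ensure the existence of $\tau>0$ with $F(\tau I)=1$. Set $\psi(x)\coloneqq\tfrac{\tau}{2}|x-x_0|^2$, $v\coloneqq u-\psi$, and $\tilde F(M)\coloneqq F(M+\tau I)-1$. Then $\tilde F$ inherits Assumptions~\ref{assump_F} and \ref{assump_FC1}, and $v$ is an $L^n$-strong (hence, by Proposition~\ref{prop_nacional}, $L^n$-viscosity) solution of $\tilde F(D^2 v)=\tilde f$ with $\tilde f\in L^\infty(B_1^+)$, since $F(D^2 u)\in L^\infty(B_1^+)$ as derived in \eqref{eq_deu}. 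Applying Theorem~\ref{BMOthm} to $v$ at $x_0\in B_{1/2}^{\mu/2}\subset B_{1/2}^\mu$ yields second-order polynomials $\tilde P_r$ with $\tilde F(D^2\tilde P_r)=0$, $|D^2\tilde P_{2r}-D^2\tilde P_r|\leq C$, and $\sup_{B_r(x_0)\cap B_1^+}|v-\tilde P_r|\leq Cr^2$, where $C$ absorbs $\max\{1,\|u\|_{L^\infty(B_1^+)}\}$. Define $p_{r,x_0}\coloneqq D^2\tilde P_r+\tau I$; then $F(p_{r,x_0})=\tilde F(D^2\tilde P_r)+1=1$ and \eqref{estA} is immediate.

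For the quadratic approximation, let $P_r\coloneqq\tilde P_r+\psi$; it has Hessian $p_{r,x_0}$ and satisfies $|u-P_r|=|v-\tilde P_r|\leq Cr^2$ on $B_r^+(x_0)$. Expanding around $x_0$,
\[
P_r(y)=\tfrac{1}{2}\langle p_{r,x_0}(y-x_0),y-x_0\rangle+b_r\cdot(y-x_0)+c_r,
\]
we need $|c_r|\leq Cr^2$ and $|b_r|\leq Cr$. The former is immediate from $u(x_0)=0$ and the $L^\infty$-bound. The crux is the estimate on $b_r$: the function $w\coloneqq u-P_r$ satisfies $\|w\|_{L^\infty(B_r^+(x_0))}\leq Cr^2$ and, in the viscosity sense via \eqref{eq_francesinha},
\[
\mathcal{M}^-_{\lambda,\Lambda}(D^2 w)\leq f-1\leq \mathcal{M}^+_{\lambda,\Lambda}(D^2 w),
\]
with bounded right-hand side. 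Standard interior Lipschitz estimates for Pucci extremal inequalities give $|Dw(x_0)|\leq C\bigl(\|w\|_{L^\infty}/r+r\bigr)\leq Cr$ whenever $B_r(x_0)\subset B_1^+$, and combined with $Du(x_0)=0$ this yields $|b_r|=|Dw(x_0)|\leq Cr$. Estimate \eqref{est1} then follows from the triangle inequality, and \eqref{est2} from \eqref{est1} and $|y-x_0|\leq r$.

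The main obstacle is the uniform Lipschitz bound on $w$ at $x_0$ when $(x_0)_n$ is arbitrarily small: purely interior estimates degenerate as $\dist(x_0,B_1')\to 0$, so one must invoke a boundary Lipschitz bound for $w$ exploiting the $C^{2,\alpha}$-regularity of the datum $g$ through the boundary condition $u=g$ on $B_1'$ (together with $u(x_0)=0$, which forces $g(x_0)=0$ when $(x_0)_n=0$), in the spirit of the boundary framework underlying Proposition~\ref{prop_newthm32}. Once this uniform Lipschitz control is in place, the three-step scheme above closes the argument.
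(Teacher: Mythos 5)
Your proposal is correct and follows essentially the same route as the paper: apply Theorem \ref{BMOthm}, use $u(x_0)=|Du(x_0)|=0$ together with gradient estimates at scale $r$ (up to the flat boundary) to discard the affine part of the approximating polynomial, and shift the Hessian by a multiple of the identity to pass from $F=0$ to $F=1$ (the paper performs this shift at the end via $\delta I$, you perform it at the start via $\tau I$ and the operator $\tilde F$). The boundary Lipschitz control you flag as the main obstacle is exactly what the paper invokes as ``standard regularity estimates'' in \eqref{eurocopa}, so your treatment matches the paper's level of detail on that step.
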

\begin{proof} 
Proposition \ref{prop_nacional} ensures $u$ is entitled to the conclusion of Theorem \ref{BMOthm}. Consequently, we consider $\mu$ and $r_0$ as defined in that theorem. For each $0<r \leq r_0$ and $x^0 \in B_{r_0}^\mu$, there exists a second-order polynomial $p_{r,x^0}$ such that
\[
F(D^2 p_{r,x^0})=0,
\]
\[
|D^2p_{2r,x^0}-D^2 p_{r,x^0}| \leq C\max\{1,\|u\|_{L^\infty(B_1^+)}\},
\]
and
\begin{equation}\label{BMOest'}
\sup\limits_{B_{r}(x^0)\, \cap B^+_{1}}\left|u(x)-p_{r,x^0}(x) \right| \leq C\max\{1,\|u\|_{L^\infty(B_1^+)}\}\, r^2. 
\end{equation}

Define
$$
v(x)=\frac{u(rx-x_0)-p_{r,x^0}(rx-x_0)}{r^2},
$$
for $x \in B_1^+$. Notice that $v$ is a normalized $L^p$-viscosity solution to
$$
G(D^2 v)=g  \hspace{.2in}\mbox{in }\hspace{.2in} B_1^+,
$$
where $G(M)\coloneqq F(M+P_{r,x^0})$, and $g \in L^\infty(B_1^+)$ satisfies $\left\|g\right\|_{L^\infty(B_1^+)} \leq \min\left\lbrace1,K\right\rbrace$. 
Since $u(x_0)=|Du(x_0)|=0$, standard regularity estimates imply
\begin{equation}\label{eurocopa}
\frac{|Dp_{r,x^0}(x_0)|r+|p_{r,x^0}(x_0)|}{r^2} \leq |Dv(0)|+|v(0)| \leq C,
\end{equation}
for some universal $C>0$. 

In addition, Assumption \ref{assump_F} builds upon the equality $F(D^2 p_{r,x^0})=0$ to produce a universal constant $\delta \in \mathbb{R}$ such that, for $\tilde p_{r,x^0}\coloneqq D^2p_{r,x^0}+\delta I_n$, we have
$$
F(\tilde p_{r,x^0})=1.
$$
Finally, combining \eqref{BMOest'}, \eqref{eurocopa}, and the triangle inequality, we conclude 
$$
\sup\limits_{B_{r}(x^0)\, \cap B^+_{1}}\left|u(x)-\frac{1}{2}\langle \tilde p_{r,x^0}(x-x_0),x-x_0\rangle \right| \leq \left(C(1+\|u\|_{L^\infty(B_1^+)})+\delta\right)\, r^2.
$$
Hence, \eqref{est1} holds. Also, estimate \eqref{est2} follows from \eqref{est1}. Renaming $\tilde p_{r,x_0}$ as $p_{r,x_0}$ one finishes the proof.
\end{proof}

Hereafter, we suppose $x_0 \in \partial\Omega$ and $u(x_0)=|Du(x_0)|=0$. For $r>0$ to be chosen universally small, we define 
$$
H_{r,x_0}^+\coloneqq \frac{\left(B_r^+(x_0)\setminus\Omega\right)-x_0}{r}.
$$ 
In other words, $H_r$ is the rescaled region where the Hessian is bounded a priori. 

\begin{remark}[Geometry of $H_{r,x_0}^+$]\label{rem_geomh}\normalfont
For the reader's convenience, we notice two properties of $H_{r,x_0}^+$, which are helpful in the upcoming arguments. For each $\beta, \rho \in (0,1)$, we first claim that if $x \in  H^+_{\beta,x_0} \cap B_\rho^+(x_0)$ then $\frac{x}{\rho} \in  H^+_{\beta\rho,x_0}$. In fact, taking $x$ as above, we denote $z=(x/\rho)$ and note that
$$
(\beta \rho) z  = \beta x \notin \Omega, \quad \mbox{with} \quad |z| \leq 1;
$$
hence, $z \in H^+_{\beta\rho,x_0}$. We also claim that, if $y \in  H^+_{\beta,x_0}$, then $\rho y \in  H^+_{\frac{\beta}{\rho},x_0}\cap B_\rho^+(x_0)$. Indeed, $\beta y \in B_\beta^+(x_0)$ and $\beta y \notin \Omega$; therefore,
$$
\frac{\beta}{\rho}(\rho y) \in B^+_\beta(x_0) \subset B^+_{\beta/\rho}(x_0) \quad \mbox{and} \quad \frac{\beta}{\rho}(\rho y) \notin \Omega.
$$ 
\end{remark}
Next, assuming $P_r$ is large, we prove that $H_r^+$ has a universal $L^n$-decay.
\begin{proposition}\label{mondego}
Let $u$ be an $L^n$-strong solution to \eqref{maineq}. Let $0<\mu\ll1$ be the universal constant from Theorem \ref{BMOthm}. Fix $x_0\in B_{1/2}^{\mu/2}$ and suppose
\[
	u(x_0)=|Du(x_0)|=0.
\]
 Suppose Assumptions \ref{assump_F} and \ref{assump_FC1} are in force. There exists a universal constant $M$ such that if 
$$
|p_{r,x_0}| \geq M,
$$
then 
$$
|H^+_{r/2,x_0}\cap B_\mu^+(x_0)| \leq \frac{|H^+_{r,x_0}\cap B_\mu^+(x_0)|}{2^n},
$$
for every $0<r\ll1$.
\end{proposition}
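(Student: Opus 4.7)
The plan is to argue by compactness and contradiction, rescaling so that the quadratic polynomial $p_{r,x_0}$ is normalised to unit norm; then the rigidity of a non-degenerate quadratic clashes with the bounded-Hessian constraint on the bad set, forcing it to vanish. I first rescale and normalise: set $\tilde u(z) := u(x_0+rz)/r^2$ on the rescaled upper hemisphere, effectively $B_1^+$ up to a vanishing boundary shift when $(x_0)_n>0$. By Corollary \ref{honorato}, one has $|\tilde u(z) - \frac{1}{2}\langle p_{r,x_0}z, z\rangle|\leq C$ on $B_1^+$. Introducing $P:=p_{r,x_0}/|p_{r,x_0}|$ (so $|P|=1$) and $w(z):=\tilde u(z)/|p_{r,x_0}|$, we get $|w(z) - \frac{1}{2}\langle Pz, z\rangle| \leq C/|p_{r,x_0}|$, while on the rescaled bad set $H^+_{r,x_0}$ one has $|D^2 w|\leq K/|p_{r,x_0}|$. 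On the good region $w$ satisfies $G(D^2 w)=1/|p_{r,x_0}|$, where $G(M):=F(|p_{r,x_0}|M)/|p_{r,x_0}|$ is uniformly elliptic with the same constants as $F$.

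Next I run the compactness step. Suppose the proposition fails, so there exist sequences $u_k$, $r_k$, $x_0^k$ with $|p_k|:=|p_{r_k,x_0^k}|\to\infty$ violating the claimed decay. The normalised functions $w_k$ are uniformly bounded on $B_1^+$; along a subsequence, the rescaled operators $G_k$ converge locally uniformly to a uniformly elliptic, positively $1$-homogeneous operator $G_\infty$ with $G_\infty(P_\infty)=0$, and by stability of $L^n$-viscosity solutions together with Krylov--Safonov estimates and Arzela--Ascoli, $w_k \to w_\infty(z)=\frac{1}{2}\langle P_\infty z, z\rangle$ locally uniformly, with $|P_\infty|=1$. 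Since $|D^2 w_k|\to 0$ on $H^+_{r_k,x_0^k}$ and $w_k\to w_\infty$, any measure-theoretic accumulation of $H^+_{r_k, x_0^k}$ must satisfy $D^2 w_\infty=0$; but $D^2 w_\infty\equiv P_\infty\neq 0$, so the bad sets collapse to null sets. Running the same argument at scale $r_k/2$---using $|p_{r_k/2,x_0^k}|\geq|p_k|-C\to\infty$ from \eqref{estA}---gives $|H^+_{r_k/2,x_0^k}|\to 0$ as well.

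To reach the quantitative rate $2^{-n}$ stated in the proposition, I exploit the change-of-variables identity $|H^+_{r/2,x_0}|=2^n|H^+_{r,x_0}\cap B_{1/2}|$, immediate from the definition of $H^+_{r,x_0}$, which reduces the desired inequality to the density bound $|H^+_{r,x_0}\cap B_{1/2}|\leq 4^{-n}|H^+_{r,x_0}|$ (and an analogous identity for the $B_\mu^+$ intersections). I expect the main obstacle to be precisely this quantitative step: compactness alone yields qualitative vanishing of the bad set, but to pin down the ratio one needs either a refined compactness argument showing that the bad set concentrates away from the origin---rigorously, near $z=0$ the function $w$ is closely approximated by $\frac{1}{2}\langle P z,z\rangle$ whose Hessian stays bounded away from zero, so $H^+_{r,x_0}$ cannot cluster near $0$---or a direct ABP-type measure estimate applied to $v:=\tilde u - \frac{1}{2}\langle p_{r,x_0}z, z\rangle$, which is $L^\infty$-bounded yet has Hessian of norm $\gtrsim|p_{r,x_0}|$ on $H^+_{r,x_0}$, forcing the latter to be arbitrarily small in measure (and, by choosing $M$ universal and large enough, arbitrarily small relative to $|H^+_{r,x_0}|$).
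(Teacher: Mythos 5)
Your reduction via the scaling identity (essentially Remark \ref{rem_geomh}) is fine: the claimed decay is equivalent to a relative density bound of the form $|H^+_{r,x_0}\cap B^+_{\mu/2}(x_0)|\leq 4^{-n}|H^+_{r,x_0}\cap B^+_{\mu}(x_0)|$. The genuine gap is that nothing in your argument produces this \emph{ratio}. The negation of the proposition is a failure of a scale-invariant ratio inequality, in which both measures may already be arbitrarily small; your compactness step (normalising by $|p_{r,x_0}|$, passing to a $1$-homogeneous limit operator and the limit quadratic $\tfrac12\langle P_\infty z,z\rangle$) only shows that the bad sets become negligible in absolute measure, which is perfectly compatible with the ratio inequality failing along the sequence, so no contradiction is reached. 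Your first proposed fix (the bad set ``concentrates away from the origin'' because $w$ is uniformly close to a nondegenerate quadratic) does not work either: uniform (or even $C^1$) closeness gives no pointwise control of $D^2w$ on a set of small measure, so $H^+_{r,x_0}$ may well cluster near the origin. Your second fix, the ``ABP-type measure estimate,'' is precisely the missing step and is asserted rather than proved: ABP bounds a solution by the $L^n$-norm of the right-hand side; it does not directly convert ``$|D^2(\tilde u-\tfrac12\langle p\cdot,\cdot\rangle)|\gtrsim|p|$ on $H$'' into smallness of $|H|$, let alone into smallness of $|H\cap B^+_{\mu/2}|$ \emph{relative to} $|H\cap B^+_{\mu}|$ with a constant independent of $|H|$.

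For comparison, the paper closes exactly this step quantitatively and without compactness: it introduces the corrector $v_{r,x_0}$ solving $F(p_{r,x_0}+D^2v_{r,x_0})=1$ in $B_1^+$ with the same boundary data, which has universal $C^{2,\alpha}$ bounds up to the flat boundary by \cite[Theorem 1.3]{SS}; then $w_{r,x_0}=u_{r,x_0}-\tfrac12\langle p_{r,x_0}\cdot,\cdot\rangle-v_{r,x_0}$ satisfies Pucci inequalities with right-hand side $(f_{r,x_0}-1)\chi_{H^+_{r,x_0}}$ and zero boundary data, so Lemma \ref{lem_abp} gives $\sup|w_{r,x_0}|\lesssim|H^+_{r,x_0}|^{1/n}$, and the boundary $W^{2,p}$ estimate of Proposition \ref{prop_honorato} (applied with exponent $2n$ to the frozen-coefficient operator $G(M,x)=F(p_{r,x_0}+D^2v_{r,x_0}(x)+M)-1$) yields $\int_{B^+_{\mu/2}(x_0)}|D^2w_{r,x_0}|^{2n}\lesssim|H^+_{r,x_0}\cap B^+_{\mu}(x_0)|$. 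Since $|D^2u_{r,x_0}|\leq K$ and $|D^2v_{r,x_0}|\leq C$ on the bad set, the triangle inequality forces $|D^2w_{r,x_0}|\gtrsim|p_{r,x_0}|$ there, and integrating gives $|H^+_{r,x_0}\cap B^+_{\mu/2}(x_0)|\,|p_{r,x_0}|^{2n}\lesssim|H^+_{r,x_0}\cap B^+_{\mu}(x_0)|$; choosing $M$ universally large absorbs the constant and gives the $4^{-n}$ density bound, after which the inclusion of Remark \ref{rem_geomh} yields the stated $2^{-n}$ decay. Without this corrector-plus-$W^{2,2n}$ mechanism (or an equivalent quantitative substitute), your argument does not prove the proposition.
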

\begin{proof}
Set
\[
u_{r,x_0}(x)\coloneqq\dfrac{u(x_0+rx)}{r^2}
\]
and let $v_{r,x_0}$ be the solution of
\begin{equation}\nonumber
\begin{cases}
F(p_{r,x_0}+D^2v_{r,x_0})-1=0 & \hspace{.2in}\mbox{in} \hspace{.2in}B_1^+ \\
v_{r,x_0}=u_{r,x_0}-\langle p_{r,x_0}(x-x_0), x-x_0 \rangle & \hspace{.2in} \mbox{on} \hspace{.2in}\partial B_1^+.
\end{cases}
\end{equation} 
From Corollary \ref{honorato}, we infer that $F(p_{r,x_0})=1$, and that $v_{r,x_0}$ is universally bounded on $\partial B_1^+$. Also, $F(p_{r,x_0}+M)$ is of class $C^1$, with the same modulus of continuity as $F(M)$. Hence, an application of \cite[Theorem 1.3]{SS} guarantees that 
\begin{equation}\label{CMUC}
\|D^2 v_r\|_{C^{0,\alpha}(B^+_\mu)} \leq \|D^2 v_r\|_{C^{0,\alpha}(B_{1/2}^\mu)} \leq C,
\end{equation}
for some universal $\alpha\in(0,1)$. Now, we define
$$
w_{r,x_0}\coloneqq u_{r,x_0}-\frac{1}{2}\langle p_{r,x_0}(x-x_0), x-x_0 \rangle-v_{r,x_0} \quad \mbox{in }\; B_1^+.
$$
Since $f_{r,x_0}\coloneqq F(D^2 u_{r,x_0}) \in L^{\infty}(B_1^+)$ and $f_{r,x_0} \equiv 1$ outside $H_{r,x_0}^+$, we observe that 
$$
F(D^2u_{r,x_0})-F(p_{r,x_0}+D^2v_r)=(f_{r,x_0}-1) \,\chi_{H_{r,x_0}^+}.
$$
Assumption \ref{assump_F} implies
\begin{equation*}
\begin{cases}
\mathcal{M}^-_{\lambda,\Lambda}(D^2w_{r,x_0}) \leq (f_{r,x_0} -1) \cdot\chi_{H_{r,x_0}^+} \leq \mathcal{M}^+_{\lambda,\Lambda}(D^2w_{r,x_0}) &\hspace{.2in}\mbox{in}\hspace{.2in}B_1^+ \\
w_{r,x_0} = 0&  \hspace{.2in}\mbox{on} \hspace{.2in}\partial B_1^+. 
\end{cases}
\end{equation*} 
Because $f_{r,x_0}$ is universally bounded, we apply Lemma \ref{lem_abp} to conclude 
\begin{equation}\label{academica}
\sup_{B_1^+}|w_{r,x_0}| \leq C\|\chi_{H_{r,x_0}^+}\|_{L^n(B_1^+)}=C|H_{r,x_0}^+|^{1/n}.
\end{equation}
In parallel, we notice that $w_{r,x_0}$ solves
$$
G(D^2 w_{r,x_0}, x)=(f_{r,x_0}-1)\chi_{H_{r,x_0}^+} \quad \mbox{in }\; B_{\mu}^+,
$$
where $G(M,x)\coloneqq F(p_{r,x_0}+D^2v_{r,x_0}(x)+M)-1$. From estimate \eqref{CMUC}, we observe that $G$ is a fully nonlinear operator with H\"older coefficients in $B_\mu^+$. To apply Proposition \ref{prop_honorato}, we must examine the solutions' regularity to $G(M,x_0)=0$.

Indeed, 
\[
    G(M,x_0)=F(p_{r,x_0}+D^2v_{r,x_0}(x_0)+M)-1.
\]
It is clear that $G$ is $(\lambda,\Lambda)$-elliptic. Furthermore, 
\[
    DG(M,x_0)=DF(p_{r,x_0}+D^2v_{r,x_0}(x_0)+M);
\]
hence, the modulus of continuity of $DG$ coincides with the one prescribed in Assumption \ref{assump_FC1}. We conclude that solutions to $G(D^2w,x_0)=0$ in $B_\mu^+$, with $w=0$ on $B_\mu'$, are locally $C^{2,\alpha}$-regular. Proposition \ref{prop_honorato} builds upon the former discussion to yield
\[
	\begin{split}
		\int_{B_{\mu/2}^+(x_0)}\left|D^2 w_{r,x_0}\right|^{2 n}& \leq C\left(\left\|w_{r,x_0}\right\|_{L^{\infty}(B_{\mu/2}^+(x_0))}+\|\chi_{H_{r,x_0}^+}\|_{L^{2 n}(B_{\mu/2}^+(x_0))}\right)^{2 n} \\
			&\leq C|H^+_{r,x_0} \cap B_{\mu/2}^+(x_0)|.
	\end{split}
\]
Where we have used \eqref{academica}. 

To conclude the proof, observe that \eqref{maineq} implies
$$
\displaystyle\int_{H^+_{r,x_0} \cap B_{\mu/2}^+(x_0)}|D^2u_{r,x_0}|^{2n}\, dx \leq K^{2n}|H^+_{r,x_0} \cap B_{\mu/2}^+(x_0)| \leq K^{2n}|H_{r,x_0}^+|.
$$
Therefore,
\begin{equation}\nonumber
\begin{split}
|H^+_{r,x_0} \cap B_{\mu/2}^+(x_0)||P_r|^{2n}& =  \displaystyle\int_{H^+_{r,x_0} \cap B_{\mu/2}^+(x_0)}|p_{r,x_0}|^{2n}\, dx \\
& \leq  C\displaystyle\int_{H^+_r \cap B_{\mu/2}^+(x_0)}|D^2u_{r,x_0}|^{2n}\, dx\\
&\quad + \displaystyle\int_{H^+_r \cap B_{\mu/2}^+(x_0)}|D^2v_{r,x_0}|^{2n} + |D^2 w_{r,x_0}|^{2n}\, dx \\
& \leq  K^{2n}|H^+_{r,x_0}\cap B_\mu^+(x_0)| +C|H^+_{r,x_0} \cap B_{\mu/2}^+(x_0)|.
\end{split}
\end{equation}
Hence, we suppose $|P_r|$ is universally large so that
\begin{equation}\label{pingodoce}
	\begin{split}
		|H^+_{r,x_0} \cap B_{\mu/2}^+(x_0)||p_{r,x_0}|^{2n} &\leq \overline{C}|H^+_{r,x_0}\cap B_\mu^+(x_0)| \\
			&\leq \frac{1}{4^n}|p_{r,x_0}|^{2n}|H^+_{r,x_0}\cap B_{\mu}^+(x_0)|,
	\end{split}
\end{equation}
for some unversal $\overline{C}>0$. Remark \ref{rem_geomh} yields
$$
\frac{(H^+_{r/2,x_0} \cap B_{\mu}^+(x_0))}{2} \subset (H^+_{r,x_0} \cap B_{\mu/2}^+(x_0)),
$$
where
\[
\frac{(H^+_{r/2,x_0} \cap B_{\mu}^+(x_0))}{2}\coloneqq\{y \in B_1^+\, | \, 2y \in H^+_{r/2,x_0} \cap B_{\mu}^+(x_0) \}.
\]
The former inclusion holds because
$$
ry=\frac{r}{2}(2y) \in B_{r/2}^+(x_0)\setminus \Omega \subset B_{r}^+(x_0)\setminus \Omega,
$$
for each 
\[
y\in \frac{(H^+_{r/2,x_0} \cap B_{\mu}^+(x_0))}{2}.
\]
Therefore,
$$
|H^+_{r/2,x_0} \cap B_{\mu}^+(x_0)| \leq 2^n|H^+_{r,x_0} \cap B_{\mu/2}^+(x_0)|.
$$

Finally, from \eqref{pingodoce}, we conclude that 
$$
|H^+_{r/2,x_0} \cap B_{\mu}^+(x_0)| \leq \frac{1}{2^n} |H^+_{r,x_0}\cap B_{\mu}^+(x_0)|,
$$
which gives us the desired estimate.
\end{proof}

In the sequel, we put forward the proof of Theorem \ref{mainthm}.

\begin{proof}[Proof of Theorem \ref{mainthm}]
Because $u\in W^{2,n}(B_1^+)$, it is twice differentiable almost everywhere. We suppose $x_0$ is a Lebesgue point for $u$, and $u(x_0)=|Du(x_0)|=0$. For $M$ as in Proposition \ref{mondego}, the following alternative arises: either
\[
    \liminf_{k\to \infty}|p_{2^{-k},x_0}|\leq 3M
\]
or 
\begin{equation}\label{eq_fridaynight}
    \liminf_{k\to\infty}|p_{2^{-k},x_0}|\geq 3M.
\end{equation}
In the former case, an application of Corollary \ref{honorato} yields
\begin{equation}
\begin{array}{rcl}
|D^2 u(x_0)| & = & \displaystyle \lim_{k\to \infty} \intav{B^+_{2^{-k}}(x_0)} |D^2 u(y)|\, dy \\
& = & \displaystyle \lim_{k\to \infty} \intav{B^+_{2^{-k}}(x_0)} |D^2 u(y)-p_{2^{-k},x_0}|\, dy + |p_{2^{-k},x_0}| \\
 & \leq & C+3M,
\end{array}
\end{equation}
and concludes the proof. It remains to consider \eqref{eq_fridaynight}.

\vspace{.2in}

\noindent{\bf Step 2 - }If \eqref{eq_fridaynight} is in force, we define $k_0\in\mathbb{N}$ as 
\[
    k_0\coloneqq\inf\left\lbrace k\geq 2,\hspace{.1in}\mbox{such that}\hspace{.1in}|p_{2^{-j},x_0}|\geq 2M \hspace{.1in}\mbox{for every}\hspace{.1in}j\geq k\right\rbrace.
\]
We claim that $k_0$ is finite; indeed, because of \eqref{eq_fridaynight}, there must be $k_1\in\mathbb{N}$ such that $|p_{2^{-k_1},x_0}|>2M$. By the very definition of $k_0$, it follows that $k_0\leq k_1$. It also follows from the definition of $k_0$ that $|p_{2^{-k_0-1},x_0}|\leq 2M$. Now, we resort to Corollary \ref{honorato} for the first time in the proof. In fact, \eqref{estA} yields
\[
    \left|p_{2^{-k_0},x_0}\right|\leq \left|p_{2^{-k_0},x_0}-p_{2^{-k_0-1},x_0}\right|+\left|p_{2^{-k_0-1},x_0}\right|\leq C+2M.
\]

\vspace{.2in}

\noindent{\bf Step 3 - }We continue by defining the function $\overline u_0$ as
\[
    \overline u_0(x)\coloneqq 4^{k_0}u(2^{-k}x+x_0)-\frac{1}{2}\left\langle p_{2^{-k_0},x_0}(x-x_0),x-x_0\right\rangle.
\]
Notice $\overline u_0$ solves
\begin{equation}\label{eq_G}
    F(D^2\overline u_0+p_{2^{k_0},x_0})-1=\left(f_{2^{-k_0},x_0}-1\right)\chi_{H_{2^{-k_0},x_0}^+}\hspace{.2in}\mbox{in}\hspace{.2in}B_1^+
\end{equation}
We want to apply a boundary variant of Caffarelli's regularity estimates. First notice that $\tilde F(M)\coloneqq F(M+p_{2^{k_0},x_0})-1$ satisfies Assumptions \ref{assump_F} and \ref{assump_FC1}, with the same constants. 

Now, we examine $\tilde f\coloneqq (f_{2^{-k_0},x_0}-1)\chi_{H_{2^{-k_0}}^+}$. Because $|p_{2^{-k},x_0}|\geq 2M$ for every $k\geq k_0$, Proposition \ref{mondego} implies 
\[
    \left|H_{2^{-k_0-j},x_0}^+\cap B_\mu^+(x_0)\right|\leq \frac{1}{2^{jn}}\left|H_{2^{-k_0},x_0}^+\cap B_\mu^+(x_0)\right|\leq\frac{1}{2^{jn}}\left|B_1^+\right|,
\]
for every $j\geq k_0$. As a consequence, for 
\[
	r\leq\min\left(\mu,2^{-k_0}\right),
\]
we get
\begin{equation}\label{eq_diospiro}
\intav{B_r^+}\left|(f_{2^{-k_0},x_0}-1)\chi_{H_{2^{-k_0},x_0}^+}\right|^n{\rm d}x\leq C\intav{B_r^+}\chi_{H_{2^{-k_0},x_0}^+\cap B_\mu^+(x_0)}{\rm d}x\leq Cr^n,
\end{equation}
for some universal constant $C>0$.

Because of Proposition \ref{prop_honorato}, solutions to $\tilde F=0$ have $C^{2,\alpha}$-regularity estimates at $x=0$. The geometric decay in \eqref{eq_diospiro} ensures $\overline u_0$ is of class $C^{2,\alpha}$ at $x=0$ (see, for instance, \cite[Theorem 2.7]{LW}). Therefore,
\[
    \left|D^2u(x_0)\right|\leq \left|D^2\overline u_0(0)\right|+\left|p_{2^{-k_0},x_0}\right|\leq C,
\]
for some universal constant $C>0$, which completes the proof.
\end{proof}

\medskip

\noindent{\bf Acknowledgments:} DJA is partially supported by CNPq-Brazil, grant 311138/2019-5, and Para\'iba State Research Foundation (FAPESQ), grant 2019/0014. EP is partially supported by the Centre for Mathematics of the University of Coimbra (CMUC, funded by the Portuguese Government through FCT/MCTES, DOI 10.54499/UIDB/00324/2020).

\bibliographystyle{amsalpha}
\bibliography{biblio}

\end{document}